\definecolor{armygreen}{rgb}{0.29, 0.33, 0.13}
\definecolor{maroon(x11)}{rgb}{0.69, 0.19, 0.38}
\definecolor{darkorange}{rgb}{1.0, 0.55, 0.0}
\definecolor{auburn}{rgb}{0.43, 0.21, 0.1}
\newtheorem{Theorem}{Theorem}
\newtheorem{Remark}{Remark}
\newtheorem{Lemma}{Lemma}
\newcommand{\uinc}{u_{\rm inc}}
\newcommand{\Sph}{\mathbb{S}}
\definecolor{hotcolor}{rgb}{1,0,0}
\journal{Elsevier}
\begin{document}

\begin{frontmatter}




\title{\textbf{High order methods for acoustic scattering: Coupling Farfield Expansions ABC with Deferred-Correction methods}}

\author[label1]{Vianey Villamizar\corref{cor1}}
\ead{vianey@mathematics.byu.edu}

\author[label1]{Dane Grundvig}
\ead{danesvig@gmail.com}

\author[label2,label3]{Otilio Rojas}
\ead{rojasotilio@gmail.com}

\author[label4]{Sebastian Acosta}
\ead{sacosta@bcm.edu}
\ead[url]{sites.google.com/site/acostasebastian01}

\address[label1]{Department of Mathematics, Brigham Young University, Provo, UT}
\address[label2]{Barcelona Supercomputing Center (BSC)}
\address[label3]{Universidad Central de Venezuela, Facultad de Ciencias, Caracas, Venezuela}
\address[label4]{Department of Pediatrics - Cardiology, Baylor College of Medicine, Houston, TX}
\cortext[cor1]{Corresponding author}

\begin{abstract}
Arbitrary high order numerical methods  for time-harmonic acoustic scattering problems originally defined on unbounded domains are constructed.
This is done by coupling recently developed high order local absorbing boundary conditions (ABCs) with finite difference methods for the Helmholtz equation.
 These ABCs are based on exact representations of the outgoing waves by means of farfield expansions. The finite difference  methods, which are constructed from a deferred-correction (DC) technique, approximate the Helmholtz equation and the ABCs, with the appropriate number of terms, to any desired order. 
 As a result,  high order numerical methods with an overall order of convergence equal to the order of the DC schemes are obtained.
   A detailed construction of these DC finite difference schemes is presented. Additionally, a rigorous proof of the consistency of the DC schemes with the Helmholtz equation and the ABCs in polar coordinates is also given. 
The results of several numerical experiments corroborate the high order convergence of the novel method.
\end{abstract}

\begin{keyword}
Acoustic scattering \sep High order absorbing boundary conditions \sep Helmholtz equation \sep High order numerical methods\sep Deferred-correction methods
\end{keyword}

\end{frontmatter}

\section{Introduction} \label{Section.Intro}

The propagation and scattering of acoustic waves in the presence of impenetrable obstacles, in an unbounded medium, is an important problem for which significant efforts have been dedicated. However, there are still aspects of this problem that have not yet been satisfactorily solved .  One of them is the construction of easily implementable, reliable and stable high order numerical methods for the accurate approximation of its solution. This is the subject of this work. The construction of high order numerical methods is motivated by the need to obtain highly precise numerical solutions at relatively low computational costs.

 A classical strong formulation, when a time-harmonic incident 
wave, $\uinc$, is scattered from an obstacle with a boundary $\Gamma$ embedded in an unbounded acoustic region $\Omega$,
consists of finding $u\in C^2(\Omega) \cap C({\overline \Omega})$ such that 
 \begin{align}
& \Delta u + k^2 u = f  &\text{in $\Omega$}, \label{BVPsc1} \\
& u = - \uinc \qquad \mbox{or}\qquad \partial_{n} u = - \partial_{n} \uinc ,&\text{on $\Gamma$,} \label{BVPsc2} \\
& \lim_{r \rightarrow \infty} r^{(\delta -1/2)} \left( \partial_{r} u - \mathrm{i} k u \right) = 0,\label{BVPsc3}
\end{align}

\noindent
 where $\Delta$ denotes the Laplace operator, $\partial_n$ is the normal derivative and $i$ is the imaginary unit.
 Both the wavenumber $k$ and the source term $f$ may vary in space. On $\Gamma$, we will study both boundary conditions at the scatterer, either the first equation in (\ref{BVPsc2}) corresponding to the sound-soft Dirichlet condition, or,  the second equation in (\ref{BVPsc2}) corresponding to the sound-hard Neumann condition.
 Equation (\ref{BVPsc3}) is known as the Sommerfeld radiation condition, where $r = |\textbf{x}|$ and $\delta=2$ or 3 for two or three dimensions, respectively.
 This condition renders $u$ as an outgoing wave.  
 
 It is well-known \cite{Bayliss1985, Babuska1999,BabuskaSauter1997, Wang2014} that the accuracy of the numerical methods for the Helmholtz equation (\ref{BVPsc1}) based on finite differences or finite elements deteriorates rapidly when the wave number $k$ increases. This phenomenon is known as pollution error. A common practice, to avoid this error for a given numerical method, consists of increasing the number of points per wavelength $PPW = \lambda/h$, where $\lambda$ is the wavelength and $h$ represents the grid step size. However, this approach becomes computational very costly as $k$ increases. An alternative to alleviate the computational cost is to employ high order schemes since they require less points per wavelength to achieve same accuracy level as their low order counterparts. This is the approach that we follow in this work by constructing an arbitrary high order finite difference method for a bounded version of the acoustic scattering problem (\ref{BVPsc1})-(\ref{BVPsc3}). 

Among the most popular alternatives  to finite difference are finite element (FEM) and boundary element methods (BEM). These methods have their own advantages and
shortcomings when approximating the solutions of a boundary value problem (BVP). An important FEM advantage is their ability to deal with domains of arbitrary shape.
 However, high order convergence usually requires high number of degrees of freedom which normally leads to elevated computational cost. The BEM have the advantage that the Sommerfeld radiation condition is already built into the numerical method, so there is no need to introduce an artificial boundary and define an ABC on it. In contrast, the BEM major shortcoming is that they are limited to homogeneous media. In this study, we opt for finite difference methods because  they are easy of use and their implementation is rather simple and flexible enough to be applied to heterogeneous media.

In the context of finite difference methods, there has been a lot of interest in high order numerical methods in recent years. 
In fact, for interior problems modeled by the Helmholtz equation, several fourth and sixth order numerical methods have appeared in the last 25 years. For instance, Singer and Turkel \cite{Singer-Turkel1998, Singer-Turkel2006} developed compact fourth and sixth order methods in two dimensions for constant wavenumber using cartesian coordinates. 
Sutmann \cite{Sutmann2007} devised a compact sixth order method for  Dirichlet boundary value problems (BVPs)  and Nabavi et al. \cite{Nabavi2007} for Neumann BVPs. 
All of these compact numerical methods were obtained from the so called {\it equation-based} \cite{Singer-Turkel1998} procedure
By applying it, they obtained their compact fourth and sixth order 9-point finite difference formulas to approximate the two-dimensional  Helmholtz equation in cartesian coordinates.
It resembles the 
strategy followed by Collatz and Leveque in  \cite{CollatzBook,LevequeBook}, respectively, to obtain the well-known compact 9-point finite difference formula  for the two-dimensional Poisson equation in cartesian coordinates.  More recently, Zhang et al. \cite{Zhang-Wang-Guo2019} derived a sixth order finite difference scheme for the Helmholtz equation with inhomogeneous Robin boundary conditions in two dimensions. 

Other authors \cite{Kim2003,Britt-Tsynkov-Turkel2011} constructed equation-based compact 9-point fourth and sixth order schemes in cartesian coordinates for interior problems modeled by the two-dimensional Helmholtz equation with variable wavenumber and/or variable coefficients.  Later, Turkel et al. \cite{Turkel-Gordon2013}  also developed a  method for  the three-dimensional Helmholtz equation with variable wavenumber. Compact fourth order finite difference methods have also been devised for the two-dimensional Helmholtz equation with high wavenumbers. For instance in  \cite{Wu2017}, Wu suppressed the numerical dispersion by using nine
points to formulate a compact fourth-order approximation for the term of zero order. Also, Fu in \cite{Fu2008} introduced an alternative compact fourth order method for high frequency, which is independent of the wavenumber. They were able to obtain approximations for wavenumbers as high as $k$ = 500 and 800, respectively.

In recent years, Medvisnky et al. \cite{Medvinsky-Tsynkov-Turkel2012,Medvinsky-Tsynkov-Turkel2013,Medvinsky-Tsynkov-Turkel2016, Medvinsky-Tsynkov-Turkel2019} extended the {\it method of difference potentials}, introduced by Ryaben'kii for standard centered finite difference schemes \cite{Ryabenkii1985,Ryabenkii2002}, to compact finite difference schemes.  This procedure involves several steps inspired in the theory of Calderon's operator for partial differential equations.
It consists of reducing the Helmholtz equation from its domain $\Omega$ 
to an equivalent equation defined only on its boundary $\Gamma$ and numerically solve this simpler equation. As part of this process, the Helmholtz equation is approximated by a compact fourth or sixth order scheme.
A detailed account of this procedure can be found in \cite{Medvinsky-Tsynkov-Turkel2012,Britt-Tsynkov-Turkel2013,Medvinsky-Tsynkov-Turkel2019}.
At the final stage of the computation, they calculate a grid function $\xi_{\gamma}$  from which a discrete Calderon's potential or difference potentials is obtained.
 They show that this difference potential approximates its continuous counterpart with the same order of accuracy of a compact scheme used to approximate the Helmholtz equation \cite{Medvinsky-Tsynkov-Turkel2019}.
As a consequence, the approximation of the scattered field $u^h$ also converges to $u$  with this same order of accuracy. 

Among the advantages of the method of difference potentials is its ability to handle smooth curvilinear boundaries and variable wavenumbers. 
The grid function ${\xi}_{\gamma}$ is represented in terms of a basis for the space of smooth functions on $\Gamma$, which is evaluated at the grid points. 
Therefore, the linear system arising from the discretization has as unknowns the expansion coefficients with respect to the chosen basis, instead
of the node values of $\xi_{\gamma}$.
A possible disadvantage is
that
the boundary conditions should be represented  by a volumetric spectral solver for the same basis to complete the linear system. This certainly leads to a more dense linear system than those obtained from direct application of finite difference or finite element techniques. However, a QR decomposition technique may work well in this case.

The above  methodology was applied first  to interior problems modeled by the two-dimensional Helmholtz equation with variable wavenumber \cite{Medvinsky-Tsynkov-Turkel2012,Britt-Tsynkov-Turkel2013} employing compact fourth and sixth order  equation-based schemes. For smooth solutions, the expected fourth and sixth order convergence were achieved. Later, the same authors extended the method of difference potentials to exterior problems. In fact in \cite{Medvinsky-Tsynkov-Turkel2013,Medvinsky-Tsynkov-Turkel2016}, two-dimensional transmission and scattering problems were solved for simple-shaped obstacles  and smooth regions, respectively. 
 In both cases, a fourth order discrete ABC, first introduced in  \cite{Britt-Tsynkov-Turkel2010} and
 defined in the Fourier space, was employed. It was combined with the method of difference potentials consisting of a compact fourth order accurate finite difference scheme. The fourth order convergence of the numerical solution to the exact solution was verified in several experiments. However, the discrete nature of this ABC limits its use to more general problems and makes its extension to higher orders difficult. 

Recently, an arbitrary high order three-dimensional ABC in spectral form was devised for exterior problems modeled by the Helmholtz equation in \cite{Medvinsky-Tsynkov-Turkel2019}. This was elegantly coupled with a sixth order interior scheme obtained from the method of difference potentials. It was applied only to radiating source problems (monopole and dipole), but scattering problems were not attempted. For these problems, the sixth order convergence  was experimentally corroborated. Although this novel ABC can be implemented at arbitrary high orders, direct coupling with more popular finite element or finite difference methods is not possible given its current spectral formulation. 

As described in the previous paragraphs, there have been numerous attempts to construct high order finite difference schemes for the Helmholtz equation. Similarly, the derivation of high order local ABC for time-harmonic acoustic scattering problems has been intensively pursued by many researchers, since the pioneer work of Bayliss-Gunzburger-Turkel  (BGT) \cite{Bayliss01}. For example, Zarmi and Turkel \cite{Zarmi-Turkel} developed an annihilating technique that can be applied to rather general series representation of the solution in the exterior of the computational domain. As a result, they were able to obtain high order local ABC without derivative terms greater than order two for exterior problems in the plane.  Also, Rabinovich et al. \cite{Rab-Giv-Bec-2010} adapted the auxiliary variable 
 formulation of local high order ABC for the wave equation of Hagstrom-Warburton (H-W) \cite{H-W} to time-harmonic problems in a waveguide and a quarter plane modeled by the Helmholtz equation. More recently, Hagstrom and Kim \cite{Hagstrom-Kim2019} adapted an improved version of H-W called {\it complete radiation boundary conditions} to waveguides problems in the frequency domain. They solved radiation problems inside semi-infinite waveguides with sources in their finite west boundaries. In principle, the adapted H-W absorbing boundary condition can be implemented for time-harmonic exterior problems in the entire plane. However, exterior problems are not included in \cite{Rab-Giv-Bec-2010,Hagstrom-Kim2019}.
 The application of H-W type ABC for the exterior problems use  rectangular artificial boundaries
 to enclose the scatterers. As a consequence, special treatment  at corners formed by the intersection of two flat segments is required.
 In \cite{Rab-Giv-Bec-2010,Hagstrom-Givoli2014}, 
 the authors acknowledge that these corner conditions are quite involved and even difficult to devise.  In another recent publication Duhamel \cite{Duhamel2020} constructs a high order ABC at the discrete level for a radiation problem from a circular obstacle. The results compare favorably with second order ABCs such as BGT and Feng's. However, a convergence analysis is not  included to establish more clearly the advantages of this technique.
 In all these works \cite{Zarmi-Turkel,Rab-Giv-Bec-2010,Hagstrom-Kim2019,Duhamel2020}, their high order ABCs are coupled with low order discretization schemes for the interior domain based on bilinear finite elements. As a consequence, overall low order  numerical methods (at most second order)  are obtained.
For other contributions on high order ABC, the reader is referred to the review article \cite{GivoliReview2} and also to the introduction in  \cite{JCP2017}.

 An efficient alternative to high order local ABC is provided by a technique called {\it perfectly matched layer} (PML). This consists of surrounding the artificial boundary with a layer of elements where the Helmholtz equation is modified. The PML was first introduced by Berenger for electromagnetic waves in the highly cited paper \cite{Berenger01}. An adaptation to the Helmholtz equation was devised by Becache et al. in \cite{Becache2004}. 
Good choices of the layer's size and the parameters of the absorbing layers lead to excellent absorption of waves. However, PML tends to be very sensitive to the choice of the computational parameters. Also, it is hard to establish a clear notion of convergence.  For the interested reader, a  good comparison of the two approaches, high order local ABC and PML, is given in \cite{Rab-Giv-Bec-2010}.

As far as the authors know, overall high order finite difference methods for exterior time-harmonic acoustic scattering has only been constructed up to fourth order \cite{Britt-Tsynkov-Turkel2010,Medvinsky-Tsynkov-Turkel2013,Medvinsky-Tsynkov-Turkel2016}. 
In the present study, we develop  arbitrary high  order finite difference schemes for the Helmholtz equation based on a {\it deferred-correction} (DC) methodology (see \cite{LevequeBook} Section 3.5).
 Among the pioneer applications of deferred-corrections to differential equations are the works by Pereyra \cite{Pereyra1968,Pereyra1970}. 
Our construction proceed by coupling arbitrary high order DC finite difference schemes for the Helmholtz equation with high order DC finite difference schemes corresponding to arbitrary high order ABCs based on {\it farfield expansions}, which were developed by Villamizar et al. in \cite{JCP2017}. As a result of combining these high order techniques (domain's interior and boundary), we obtain an overall arbitrary high order method for acoustic scattering. Preliminary results were presented in \cite{Waves2017}. 
Of course, the arbitrary high order property of this method is limited by the computer arithmetic and the computer resources available.
 The construction and performance analysis of this overall and arbitrary high order finite difference method for acoustic scattering problems is discussed in detail in the following sections.

\section{The two-dimensional scattering BVP with Karp's farfield expansion absorbing boundary condition (KFE-BVP)}
\label{Section.ABC2D}

 The exterior problem (\ref{BVPsc1})-(\ref{BVPsc3}) needs to be reformulated as an equivalent BVP on a bounded domain before a numerical scheme, based on volume discretization methods, can be applied. 
In Villamizar et al. \cite{JCP2017}, such problem transformation was carried out by introducing a circular (2D) or spherical (3D) artificial boundary $S$ that enclose all the scatterers, regarding of their particular shapes, and then by defining high order local ABCs on these artificial boundaries. Their definition is based on  the following series representations of the exact solution $u$ outside the region bounded by the artificial boundary $S.$
 \begin{enumerate}
\item
\mbox{Karp's farfield expansion in two dimensions [Karp1961]:}
\begin{equation}
u(r,\theta)=H_0(kr) \sum_{l=0}^{\infty} \frac{F_l(\theta)}{(kr)^l} + H_1(kr)\sum_{l=0}^{\infty} \frac{G_l(\theta)}{(kr)^l},
\qquad r\ge R, \label{Karp}
\end{equation}
\item
\mbox{Wilcox's farfield expansion in three dimensions [Wilcox1956]:}
\begin{eqnarray}
u(r,\theta,\phi) = \frac{e^{ikr}}{kr}\sum_{l=0}^{\infty} \frac{F_l(\theta,\phi)}{(kr)^l} \qquad\qquad\qquad\qquad\qquad\quad r\ge R.\label{Wilcox}
\end{eqnarray}
\end{enumerate}
In the series (\ref{Karp}), $r$ and $\theta$ are polar coordinates. The functions $H_0$ and $H_1$ are Hankel functions of first kind of order 0 and 1, respectively.
The coefficients $F_l(\theta)$ and $G_l(\theta)$ ($l>1$) can be determined from $F_0(\theta)$ and $G_0(\theta)$ by the recursion formulas
\begin{align}
& 2 l G_{l}(\theta) = (l-1)^2 F_{l-1}(\theta) + d^2_{\theta}  F_{l-1}(\theta) , \qquad && \text{for $l=1,2, \dots$} \label{RRecurrence1}\\
& 2 l F_{l}(\theta) = - l^2 G_{l-1}(\theta) - d^2_{\theta} G_{l-1}(\theta), 
\qquad && \text{for $l=1,2, \dots$}. \label{RRecurrence2}
\end{align}
In the series (\ref{Wilcox}), $r$, $\theta$, and $\phi$ are spherical coordinates and $\Delta_{\Sph}$ is the Laplace-Beltrami operator in the angular coordinates $\theta$ and $\phi$.  Also, the coefficients $F_l$ ($l \ge 1$) can be determined by the recursion formula,
\begin{eqnarray}
2 i l F_l(\theta,\phi) = l(l-1) F_{l-1}(\theta,\phi) + \Delta_{\Sph} F_{l-1}(\theta,\phi), \qquad l \geq 1. \label{AW-Recursive}
\end{eqnarray}

The artificial boundary $S$ divides the domain into a bounded computational region $\Omega^-$ enclosed by the obstacle boundary $\Gamma$ and the artificial boundary $S$, and the exterior unbounded region $\Omega^+ = \Omega\backslash \bar {\Omega}^-$. 
Once this decomposition of the domain is done, the original unbounded problem in $\Omega$ is reformulated as a bounded problem in $\Omega^-$ by matching the solution $u$ inside $\Omega^-$ with the semi-analytical representation of the solution $u$ in $\Omega^+$ given by the series representations (\ref{Karp})-(\ref{Wilcox}). 
These series are uniformly and absolutely convergent for $r>R$. They can be differentiated  term by term with respect to $r$, $\theta$, and $\phi$ any number of times and the resulting series all converge absolutely and uniformly. The angular functions $F_l$ and $G_l$ become additional unknowns of the new bounded BVP. They depend on the geometry of the scatterers and the physical properties of the medium inside the computational region  
$\Omega^-.$

 In this work, we specialize in the two-dimensional case with $f=0$ for simplicity, but its extension to non-homogeneous and three-dimensional scattering problems follows a very similar procedure. In \cite{JCP2017}, a detailed formulation of of a truncated version of an equivalent bounded BVP to (\ref{BVPsc1})-(\ref{BVPsc3}) for the scattered field $u$ and the angular functions $F_l$ and $G_l$ in $\Omega^-$ was introduced as
 \begin{align}
& \Delta u + k^2 u = 0, \quad\qquad  && \mbox{in}\quad \Omega^-, \label{BVPBd1} \\
& u = - \uinc, \qquad\qquad\,\mbox{or} \qquad\qquad \partial_{r} u = -\partial_{r} \uinc, && \mbox{in}\quad \Gamma, \label{BVPBd2} \\
& u(R,\theta)=H_0(kR) \sum_{l=0}^{L-1} \frac{F_l(\theta)}{(kR)^l} + H_1(kR)\sum_{l=0}^{L-1} \frac{G_l(\theta)}{(kR)^l},\label{BVPBd3} \\
&\partial_{r} u(R,\theta) = \partial_{r}\left( H_0(kr) \sum_{l=0}^{L-1} \frac{F_l(\theta)}{(kr)^l} + H_1(kr)\sum_{l=0}^{L-1} \frac{G_l(\theta)}{(kr)^l}\right) \bigg|_{r=R},
\label{BVPBd4} \\
& \partial_{r}^2 u(R,\theta) = \partial_{r}^2\left( H_0(kr)\sum_{l=0}^{L-1} \frac{F_l(\theta)}{(kr)^l} + H_1(kr)\sum_{l=0}^{L-1} \frac{G_l(\theta)}{(kr)^l}\right) \bigg|_{r=R}, \label{BVPBd5}\\
& 2 l G_{l}(\theta) = (l-1)^2 F_{l-1}(\theta) + d^2_{\theta}  F_{l-1}(\theta) , \qquad && \text{for $l=1,2, \dots L-1$} \label{Recurrence1}\\
& 2 l F_{l}(\theta) = - l^2 G_{l-1}(\theta) - d^2_{\theta} G_{l-1}(\theta), \qquad && \text{for $l=1,2, \dots L-1$}. \label{Recurrence2}
\end{align}
where $\Delta$ represents the Laplacian operator and $R$ is the radius of the circular artificial boundary $S$. The equations (\ref{BVPBd3})-(\ref{BVPBd5}) for the truncated Karp's expansion, with 
 $F_l$ and $G_l$ ($l=0\dots L-1$) unknown angular functions, supplemented by the recurrence formulas (\ref{RRecurrence1})-(\ref{RRecurrence2}) constitute the novel \textit{Karp's farfield expansion} ABC (KFE) constructed  in \cite{JCP2017}. 
 A careful consideration on the number of unknowns of the BVP  at the artificial boundary $r=R$ reveals the need of having as many equations as (\ref{BVPBd3})-(\ref{Recurrence2}) defining the ABC. In fact, the number of unknowns  at the artificial boundary are 
 $u(R,\theta)$,  $F_0(\theta)$, $G_0(\theta)$, $F_l(\theta)$,  and $G_l(\theta)$ ($l=1\dots L-1$).  They are  $3+2(L-1)$ in total which is the same number of independent equations given by (\ref{BVPBd3})-(\ref{Recurrence2}).

 It was shown in \cite{JCP2017}, that the numerical solution of  (\ref{BVPBd1})-(\ref{Recurrence2}) exhibits second order convergence to the exact solution, by using second order finite difference methods to approximate the Helmholtz equation in $\Omega^-$ as well as the various equations for the absorbing boundary condition  at the artificial boundary. This result was obtained by employing relatively few  terms (usually, from three to eight terms) in the Karp's expansion. Our main purpose in this article is to further exploit the high order accuracy of the KFE, by coupling high order discretizations of it to interior high order finite difference appoximations for Helmholtz equation, which leads to overall high order numerical methods for acoustic scattering. 
In the following sections, these novel high order numerical methods for (\ref{BVPBd1})-(\ref{Recurrence2}) are derived.

\section{Derivation of high order DC methods for the KFE-BVP}  \label{ssec:3}


In the next subsections, a detailed formulation of the fourth order DC numerical scheme is given for the Helmholtz equation in the domain $\Omega^{-}$ bounded externally by the artificial boundary $S$ of circular shape with radius $R$. Similarly, we also develop a fourth order DC scheme for the approximation of the high order KFE imposed on the artificial boundary. This is followed by a formulation of these numerical schemes (interior and artificial boundary)  of arbitrary order $p$.

\subsection{Fourth order DC scheme for the Helmholtz equation in polar coordinates} \label{ssec:3.1}

First we consider that the domain $\Omega^{-}$ can be covered by a polar grid with constant radial and angular steps $\Delta r$ and $\Delta\theta$, respectively. The number of grid points in the radial and angular directions is $N, m > 1$, respectively. For a given grid point $(r_i,\theta_j)$, the discrete value of the scattered field is denoted by $u_{i,j}=u(r_i,\theta_j)$. Notice that the pairs $(r_i,\theta_1)$ and $(r_i,\theta_{m+1})$ represent the same physical point due to periodicity in the angular direction, thus $u_{i,1}=u_{i,m+1}$ for $i \leq N$. Thus, the grid supports $N \times m$ wavefield evaluations. 

We start with the standard centered second order finite difference method for the Helmholtz equation in polar coordinates given by
\begin{align}
 {\mathcal{H}}^2_5U^2_{ij}&\equiv \frac{U^2_{i+1,j} - 2 U^2_{i,j} +U^2_{i-1,j}}{\Delta r^2} +
\frac{1}{r_i}\frac{U^2_{i+1,j} - U^2_{i-1,j}}{2\Delta r} 
+ \frac{1}{r_i^2}\frac{U^2_{i,j+1} - 2 U^2_{i,j} +U^2_{i,j-1}}{\Delta \theta^2} + k^2 U^2_{ij}=0. \label{2ndorder}
 \end{align}
The symbol  $U^2_{ij}$ is used to describe a discrete solution of (\ref{2ndorder}). 
The subindex 5 of $ {\mathcal{H}}^2_5$ is used to acknowledge that this finite difference formula consists of a 
 5-point stencil. The super-index 2 states that ${\mathcal{H}}^2_5U^2_{ij}=0$ is a consistent second order finite difference approximation of the Helmholtz equation (\ref{BVPBd1}).  
 Also, we consider a discrete function $U^2_{ij}$ which is a second order approximation to the exact solution $u$ of the Helmholtz equation (\ref{BVPBd1}) subject to the boundary conditions (\ref{BVPBd2})-(\ref{Recurrence2}), i.e., 
 \begin{equation}
U^2_{ij} =u(r_i,\theta_j) + \Delta r^2 v(r_i,\theta_j) + \Delta \theta^2 w(r_i,\theta_j)=u(r_i,\theta_j)+h^2z(r_i,\theta_j),\label{SecOrderU}
\end{equation}
where  $v$, and $w$ and $z$ are sufficiently smooth bounded functions on the closure of $\Omega^{-}$ and $h=\max\{\Delta r,\Delta \theta\}.$
The computation of  $U^2_{ij}$ is fully described  in \cite{JCP2017}. 
 
  Applying $\mathcal{H}^2_5$ to  $u$ and evaluating it at $(r_i,\theta_j)$ leads to
 \begin{align}
 {\mathcal{H}}^2_5u_{ij}&=
\frac{u_{i+1,j} - 2 u_{i,j} +u_{i-1,j}}{\Delta r^2} +
\frac{1}{r_i}\frac{u_{i+1,j} - u_{i-1,j}}{2\,\Delta r} 
+\frac{1}{r^2} \frac{u_{i,j+1} - 2 u_{i,j} +u_{i,j-1}}{\Delta \theta^2} + k^2 u_{ij}\nonumber\\ 
&=  \left(\Delta_{r\theta} u + k^2 u\right)_{ij} +
 \frac{\Delta r^2}{12}\left(\left(u_{4r}\right)_{ij} + \frac{2}{r_i}\left(u_{3r}\right)_{ij}\right) +
\frac{\Delta \theta^2}{12\,r_i^2}\left(u_{4\theta}\right)_{ij}  
+ O(\Delta r^4) + O(\Delta \theta^4). \label{DC4Exact}
 \end{align}
We seek to obtain a fourth order finite difference scheme for the Helmholtz equation by subtracting the second order leading terms of the truncation error
in the right hand side of (\ref{DC4Exact}) from the second order standard scheme (\ref{2ndorder}). This is followed by substitution of the partial derivatives of  $u$, present in these leading terms, by second order finite difference operators of these partial derivatives of u. They  act on the previously computed discrete solution $U^2_{ij}$ of the standard second order scheme (\ref{2ndorder}) which approximates $u$ to second order. This is the fundamental idea in the formulation of the  DC method proposed in this work for the Helmholtz equation.  
More precisely, the construction of the fourth order DC technique for the Helmholtz equation  (\ref{BVPBd1}) subject to the boundary conditions (\ref{BVPBd2})-(\ref{Recurrence2})
consists of two steps:
\begin{enumerate}
\item[Step 1:] {\it Obtaining a second order approximation $U^2_{ij}$ to the solution $u$, of the original BVP
 (\ref{BVPsc1})-(\ref{BVPsc3})}.\\
 Approximate the Helmholtz equation  (\ref{BVPBd1}) by  the standard centered second order 5-point stencil scheme $ {\mathcal{H}}^2_5U^2_{ij}=0$, defined in (\ref{2ndorder}).
Also, use appropriate one-sided second order schemes to approximate all the other boundary differential operators contained in the boundary conditions (\ref{BVPBd2})-(\ref{Recurrence2}). Then, by solving the linear system that ultimately results, from the discretization of all the equations of the KFE-BVP, obtain a second order numerical approximation $U^2_{ij}$ to the exact solution $u$, of the original BVP. This computation is done in the article \cite{JCP2017} where the KFE condition was first introduced.

\item[Step 2:]  {\it Formulation of the new fourth order finite difference DC numerical scheme for the Helmholtz equation in terms of the $U^2_{ij}$ obtained in step 1.
}\\
The second step consists of approximating the  
continuous derivatives 
$u_{4r}$, $u_{3r}$, and $u_{4\theta}$ in (\ref{DC4Exact}) using standard centered second order finite differences acting on $U^2_{ij}$ as follows,
\begin{align}
&\left(u_{4r}\right)_{ij}\approx D_{4r}^2U^2_{ij}
\equiv 
\frac{1}{\Delta r^4}\left[
 U^2_{i-2,j} - 4 U^2_{i-1,j} + 6 U^2_{i,j}
-4U^2_{i+1,j} + U^2_{i+2,j} 
\right], \label{D4r2}
\\
&\left(u_{3r}\right)_{ij}\approx D_{3r}^2U^2_{ij}
\equiv 
\frac{1}{\Delta r^3}\left[
-\frac{1}{2} U^2_{i-2,j} + U^2_{i-1,j} 
-U^2_{i+1,j} + \frac{1}{2}U^2_{i+2,j} 
\right], \label{D3r2}
\\
&\left(u_{4\theta}\right)_{ij}\approx D_{4\theta}^2U^2_{ij}
\equiv 
\frac{1}{\Delta \theta^4}\left[
 U^2_{i,j-2} - 4 U^2_{i,j-1} + 6 U^2_{i,j}
-4U^2_{i,j+1} + U^2_{i,j+2} 
\right].\label{D4theta}
\end{align}
We use the notation $D^p_{qr}$ to designate the $p$th order centered finite difference discrete operator of the $q$th derivative with respect to $r$. Analogously, $D^p_{q\theta}$ designates the $p$th order centered finite difference operator of the $q$th order derivative with respect to $\theta$.
Then by substituting (\ref{D4r2})-(\ref{D4theta}) into (\ref{DC4Exact}),  we arrive to the new fourth order finite difference DC numerical scheme for the Helmholtz equation given by
\begin{align}
 {\mathcal{H}}_5^4U^4_{ij} &\equiv \frac{U^4_{i+1,j} - 2 U^4_{i,j} +U^4_{i-1,j}}{\Delta r^2} +
\frac{1}{r_i}\frac{U^4_{i+1,j} - U^4_{i-1,j}}{2\Delta r} 
+ \frac{1}{r_i^2}\frac{U^4_{i,j+1} - 2 U^4_{i,j} +U^4_{i,j-1}}{\Delta \theta^2} + k^2 U^4_{ij}\nonumber\\
&-\frac{\Delta r^2}{12}\left(D_{4r}^2U^2_{ij} + 
\frac{2}{r_i}D_{3r}^2U^2_{ij}\right) - 
\frac{\Delta \theta^2}{12r_i^2}D_{4\theta}^2U^2_{ij}\nonumber\\
&= D_{2r}^2 U^4_{ij} + \frac{1}{r_i} D_{r}^2 U^4_{ij} + \frac{1}{r_i^2} D_{2\theta}^2 U^4_{ij} + k^2 U^4_{ij} \nonumber \\
&\quad -\frac{\Delta r^2}{12}\left(D_{4r}^2U^2_{ij} + 
\frac{2}{r_i}D_{3r}^2U^2_{ij}\right) - 
\frac{\Delta \theta^2}{12r_i^2}D_{4\theta}^2U^2_{ij}=
0.\label{4thorder}
 \end{align}
\end{enumerate}
Notice that the new finite difference scheme (\ref{4thorder}) for the unknown discrete function $U^4_{ij}$  consists of the same 5-point stencil of the standard centered second order scheme. The difference is that (\ref{4thorder}) has an additional known term given by 
$$ \frac{\Delta r^2}{12}\left(D_{4r}^2U^2_{ij} + 
\frac{2}{r_i}D_{3r}^2U^2_{ij}\right) +
\frac{\Delta \theta^2}{12r_i^2}D_{4\theta}^2U^2_{ij}, $$
which is calculated from the second order numerical solution $U^2_{ij}$ of $u$ already computed in the first step.

\begin{Remark}
At the artificial boundary $r = R$, we use appropriate second order one-sided finite differences acting on $U^2_{Nj}$ to approximate the various derivatives present in the leading terms of the truncation error in (\ref{DC4Exact}). They are defined in the following section.
\end{Remark}

In what follows, we state and prove our claim that the finite difference scheme (\ref{4thorder}) is a fourth order approximation of the Helmholtz equation.
\medskip 

\begin{Theorem}\label{Th1}
The new DC numerical scheme (\ref{4thorder}), or equivalently, 
\begin{align}
 {\mathcal{H}}_5^4{U^4}_{ij}\equiv {\mathcal H}_5^2 U^4_{ij} 
-\frac{\Delta r^2}{12}\left(D_{4r}^2U^2_{ij} + 
\frac{2}{r_i}D_{3r}^2U^2_{ij}\right) - 
\frac{\Delta \theta^2}{12r_i^2}D_{4\theta}^2U^2_{ij} = 0\label{4thorderT}
 \end{align}
is a consistent finite difference approximation of the Helmholtz equation in polar coordinates 
$$ \Delta_{r\theta} u + k^2 u
= u_{rr} + \frac{1}{r} u_r + \frac{1}{r^2}u_{\theta\theta} +k^2 u = 0,$$
 of order $\mathcal{O}(\Delta r^4) +\mathcal {O}(\Delta \theta^4) + \mathcal{O}(\Delta r^2\Delta \theta^2)$ on $\Omega^{-}$,
if the continuous function $u$ has derivatives of order 4 in its two variables $r$ and $\theta$ on $\Omega^-$; and if 
the discrete function ${ U^2_{ij}}$ is a second order approximation of $u$, i.e.,
there exists $v(r,\theta)$, and $w(r,\theta)$ sufficiently smooth and bounded functions on the closure of $\Omega^{-}$ such that 
\begin{equation}
U^2_{ij} =u(r_i,\theta_j) + \Delta r^2 v(r_i,\theta_j) + \Delta \theta^2 w(r_i,\theta_j) = u_{ij} + \Delta r^2 v_{ij} + \Delta \theta^2 w_{ij}.
\label{Uhat}
\end{equation}

\end{Theorem}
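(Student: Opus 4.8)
The plan is to establish consistency in the standard way, by estimating the local truncation error of the scheme (\ref{4thorderT}). I would take this truncation error to be the residual obtained when the exact solution $u$ is inserted into the slots occupied by the unknown $U^4_{ij}$, the deferred-correction term being left exactly as it stands in the scheme, i.e.\ assembled from the precomputed grid function $U^2_{ij}$ of Step~1. Thus I set
\[
\tau_{ij} := {\mathcal H}_5^2 u_{ij} - \frac{\Delta r^2}{12}\Big(D_{4r}^2 U^2_{ij} + \frac{2}{r_i}D_{3r}^2 U^2_{ij}\Big) - \frac{\Delta\theta^2}{12\,r_i^2}D_{4\theta}^2 U^2_{ij},
\]
and the goal is to show $\tau_{ij}=\mathcal O(\Delta r^4)+\mathcal O(\Delta\theta^4)+\mathcal O(\Delta r^2\Delta\theta^2)$ uniformly over the interior grid points of $\Omega^{-}$; the constants stay uniform because the coefficients $1/r_i$, $1/r_i^2$ are bounded on $\Omega^{-}$ (there $r_i$ is bounded below by the radius of the obstacle). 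At nodes adjacent to $\Gamma$ or to $S$ the centered operators $D_{4r}^2,D_{3r}^2$ are replaced by the one-sided second-order stencils mentioned in the Remark; since those are still second-order accurate, the estimate below goes through verbatim, and I would simply note this at the end.

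The first ingredient is the action of the base operator ${\mathcal H}_5^2$ on $u$, which is already recorded in (\ref{DC4Exact}). Using that $u$ solves the Helmholtz equation, so that $(\Delta_{r\theta}u+k^2u)_{ij}=0$, that identity reduces to
\[
{\mathcal H}_5^2 u_{ij}=\frac{\Delta r^2}{12}\Big((u_{4r})_{ij}+\frac{2}{r_i}(u_{3r})_{ij}\Big)+\frac{\Delta\theta^2}{12\,r_i^2}(u_{4\theta})_{ij}+\mathcal O(\Delta r^4)+\mathcal O(\Delta\theta^4).
\]
This is the place where the regularity assumption on $u$ enters, since it is what validates the Taylor expansions behind (\ref{DC4Exact}) and, below, the second-order accuracy of the centered stencils applied to $u$.

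The main work lies in the correction term. Inserting the representation (\ref{Uhat}) and using linearity of the three difference operators gives, for example,
\[
D_{4r}^2 U^2_{ij}=D_{4r}^2 u_{ij}+\Delta r^2\,D_{4r}^2 v_{ij}+\Delta\theta^2\,D_{4r}^2 w_{ij},
\]
and likewise for $D_{3r}^2$ and $D_{4\theta}^2$. I would then invoke two facts. First, each of $D_{4r}^2$, $D_{3r}^2$, $D_{4\theta}^2$ is a \emph{second-order} accurate discretization of its corresponding derivative, so acting on the smooth $u$ it reproduces that derivative up to an error $\mathcal O(\Delta r^2)$, respectively $\mathcal O(\Delta\theta^2)$. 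Second, because $v$ and $w$ are smooth and bounded on $\overline{\Omega^{-}}$, the quantities $D_{4r}^2 v_{ij}$, $D_{4r}^2 w_{ij}$, $D_{3r}^2 v_{ij}$, $D_{4\theta}^2 v_{ij}$, and so on, all remain $\mathcal O(1)$ as the mesh is refined; this is exactly where hypothesis (\ref{Uhat}) is indispensable, since $D_{4r}^2$ carries a factor $\Delta r^{-4}$ and would not stay bounded when applied to an arbitrary grid function. Multiplying the displayed expansion of $D_{4r}^2 U^2_{ij}$ (and its $D_{3r}^2$ counterpart) by the prefactor $\Delta r^2/12$, the error contributions become $\Delta r^2\cdot\mathcal O(\Delta r^2)=\mathcal O(\Delta r^4)$, $\Delta r^2\cdot\Delta r^2\mathcal O(1)=\mathcal O(\Delta r^4)$, and $\Delta r^2\cdot\Delta\theta^2\mathcal O(1)=\mathcal O(\Delta r^2\Delta\theta^2)$, so that
\[
\frac{\Delta r^2}{12}\Big(D_{4r}^2 U^2_{ij}+\frac{2}{r_i}D_{3r}^2 U^2_{ij}\Big)=\frac{\Delta r^2}{12}\Big((u_{4r})_{ij}+\frac{2}{r_i}(u_{3r})_{ij}\Big)+\mathcal O(\Delta r^4)+\mathcal O(\Delta r^2\Delta\theta^2),
\]
and the same reasoning for the $\theta$-correction yields $\frac{\Delta\theta^2}{12 r_i^2}D_{4\theta}^2 U^2_{ij}=\frac{\Delta\theta^2}{12 r_i^2}(u_{4\theta})_{ij}+\mathcal O(\Delta\theta^4)+\mathcal O(\Delta r^2\Delta\theta^2)$.

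To conclude I would substitute these last two displays together with the identity for ${\mathcal H}_5^2 u_{ij}$ into the definition of $\tau_{ij}$: the $\mathcal O(\Delta r^2)$ and $\mathcal O(\Delta\theta^2)$ contributions cancel identically — this cancellation is precisely the design principle of the deferred-correction step — leaving $\tau_{ij}=\mathcal O(\Delta r^4)+\mathcal O(\Delta\theta^4)+\mathcal O(\Delta r^2\Delta\theta^2)$, i.e.\ $\mathcal O(h^4)$ with $h=\max\{\Delta r,\Delta\theta\}$, which is the asserted consistency order; one then observes that the same argument with one-sided stencils covers the boundary nodes. I expect the only genuine obstacle to be the bookkeeping within the correction term: one must keep track that both the $\mathcal O(\Delta r^2)$ discretization error of the $D$-operators and the $\mathcal O(h^2)$ perturbation carried by $U^2$ are damped by the extra factor $\Delta r^2$ or $\Delta\theta^2$ and therefore do not spoil fourth-order accuracy, and one must check that no cross term worse than $\mathcal O(\Delta r^2\Delta\theta^2)$ can appear. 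The remainder is routine Taylor expansion, essentially already carried out in (\ref{DC4Exact}).
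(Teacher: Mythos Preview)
Your proposal is correct and follows essentially the same route as the paper: expand ${\mathcal H}_5^2 u_{ij}$ via (\ref{DC4Exact}), use (\ref{Uhat}) and linearity to split each $D^2_{\cdot}U^2_{ij}$ into its action on $u$ plus bounded contributions from $v,w$, and observe that the extra $\Delta r^2$ or $\Delta\theta^2$ prefactor promotes all remaining errors to fourth order (with the cross term $\mathcal O(\Delta r^2\Delta\theta^2)$). The only cosmetic difference is that the paper packages your inline estimate of $D_{4r}^2 U^2_{ij}$, $D_{3r}^2 U^2_{ij}$, $D_{4\theta}^2 U^2_{ij}$ as a separate Lemma (Lemma~\ref{Lemma1}), and it states the conclusion in the form $\mathcal{H}_5^4 u_{ij}=(\Delta_{r\theta}u+k^2u)_{ij}+\mathcal O(h^4)$ rather than setting the PDE term to zero; your version is equivalent once $u$ is taken to be a solution.
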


Before proving this theorem, we will proof the following lemma.
\begin{Lemma}\label{Lemma1}
For $U^2_{ij}$ and $u(r,\theta)$ satisfying the hypotheses of Theorem \ref{Th1}, it holds that the standard second order centered finite difference operators:
\begin{enumerate}
\item $D_{4r}^2$, as defined in (\ref{D4r2}), acting on $U^2_{ij}$, is consistent with the derivative $u_{4r}$ of order 
$\mathcal{O}(\Delta r^2) + {O}(\Delta \theta^2)$.

\item $D_{3r}^2$, as defined in (\ref{D3r2}), acting on $U^2_{ij}$,
is consistent with the derivative $u_{3r}$ of order 
$\mathcal{O}(\Delta r^2) + \mathcal{O}(\Delta \theta^2)$

\item $D_{4\theta}^2$, as defined in (\ref{D4theta}), acting on $U^2_{ij}$,
is consistent with the derivative $u_{4\theta}$ of order 
$\mathcal{O}(\Delta \theta^2) + \mathcal{O}(\Delta r^2)$
\end{enumerate}
\end{Lemma}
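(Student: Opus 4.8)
The plan is to exploit two structural facts and then do routine Taylor bookkeeping. First, each of the operators $D_{4r}^2$, $D_{3r}^2$, $D_{4\theta}^2$ is \emph{linear}, and by the hypothesis (\ref{Uhat}) of Theorem \ref{Th1} the grid function splits as $U^2_{ij} = u_{ij} + \Delta r^2 v_{ij} + \Delta \theta^2 w_{ij}$ with $v,w$ sufficiently smooth and bounded on the closure of $\Omega^{-}$. Hence, for instance, $D_{4r}^2 U^2_{ij} = D_{4r}^2 u_{ij} + \Delta r^2\, D_{4r}^2 v_{ij} + \Delta \theta^2\, D_{4r}^2 w_{ij}$, and the task reduces to estimating each of these three terms. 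So the first step I would record are the standard truncation identities for the stencils themselves: a Taylor expansion of (\ref{D4r2}) about $r_i$ gives, for any sufficiently smooth $g$,
\[
D_{4r}^2 g_{ij} = (g_{4r})_{ij} + \frac{\Delta r^2}{6}(g_{6r})_{ij} + \mathcal{O}(\Delta r^4),
\]
while (\ref{D3r2}) yields $D_{3r}^2 g_{ij} = (g_{3r})_{ij} + \mathcal{O}(\Delta r^2)$ and (\ref{D4theta}) yields $D_{4\theta}^2 g_{ij} = (g_{4\theta})_{ij} + \mathcal{O}(\Delta \theta^2)$; boundedness of the relevant derivatives makes the $\mathcal{O}$-constants uniform over the grid.

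For part (a) I would apply this with $g=u$, $g=v$, $g=w$ in turn: $D_{4r}^2 u_{ij} = (u_{4r})_{ij} + \mathcal{O}(\Delta r^2)$; $\Delta r^2\, D_{4r}^2 v_{ij} = \Delta r^2 (v_{4r})_{ij} + \mathcal{O}(\Delta r^4) = \mathcal{O}(\Delta r^2)$; and $\Delta \theta^2\, D_{4r}^2 w_{ij} = \Delta \theta^2 (w_{4r})_{ij} + \mathcal{O}(\Delta \theta^2 \Delta r^2) = \mathcal{O}(\Delta \theta^2)$. Summing the three gives $D_{4r}^2 U^2_{ij} = (u_{4r})_{ij} + \mathcal{O}(\Delta r^2) + \mathcal{O}(\Delta \theta^2)$, which is exactly the claimed consistency. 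Parts (b) and (c) follow by the identical three-term argument: in (b) substitute the $D_{3r}^2$ identity for the $D_{4r}^2$ one; in (c) use the $D_{4\theta}^2$ identity, in which the roles of $\Delta r$ and $\Delta \theta$ are interchanged, so the $g=v$ term supplies the $\mathcal{O}(\Delta r^2)$ contribution and the $g=w$ term the $\mathcal{O}(\Delta \theta^2)$ contribution.

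The only delicate point — and the one I would flag explicitly — is the smoothness budget: obtaining the genuine $\mathcal{O}(\Delta r^2)$ remainder for $D_{4r}^2 u$ requires bounded $r$-derivatives of $u$ up to order six (and similarly order five for $D_{3r}^2$, order six in $\theta$ for $D_{4\theta}^2$), which exceeds the ``derivatives of order $4$'' literally stated in Theorem \ref{Th1}; for the correction terms, which already carry a factor $\Delta r^2$ or $\Delta \theta^2$, four derivatives of $v$ and $w$ suffice. I would resolve this by invoking interior elliptic regularity: since $u$ solves the Helmholtz equation (\ref{BVPBd1}) with smooth data in $\Omega^{-}$, it is $C^{\infty}$ there, so all derivatives used above exist and are bounded on compact subsets, and the auxiliary functions $v,w$ of (\ref{SecOrderU}) may likewise be taken as smooth as needed. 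With that in place, the remaining work is purely the Taylor-with-remainder estimates sketched above, so the main obstacle is bookkeeping-level rather than conceptual — namely tracking which mesh parameter multiplies which stencil so that the mixed terms land at the advertised orders $\mathcal{O}(\Delta r^2) + \mathcal{O}(\Delta \theta^2)$.
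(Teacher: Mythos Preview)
Your proof is correct and follows essentially the same route as the paper: linearity of the stencil plus the decomposition $U^2_{ij}=u_{ij}+\Delta r^2 v_{ij}+\Delta\theta^2 w_{ij}$, followed by the second-order truncation identity for each piece and absorption of the $v$- and $w$-contributions into $\mathcal{O}(\Delta r^2)+\mathcal{O}(\Delta\theta^2)$. Your extra remark on the smoothness budget (sixth-order derivatives needed for the $\mathcal{O}(\Delta r^2)$ remainder, resolved via interior regularity) is a useful clarification that the paper leaves implicit.
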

%
%

\begin{proof}
To prove (a), we apply  $D_{4r}^2$ to $U^2_{ij}$ replaced by (\ref{Uhat}). This leads to
\begin{align*}
D_{4r}^2U^2_{ij} &= D_{4r}^2 u_{ij} + \Delta r^2 D_{4r}^2 v_{ij} + \Delta \theta^2 D_{4r}^2 w_{ij}
\\
&= (u_{4r})_{ij} + \mathcal{O}(\Delta r^2) + \Delta r^2 (v_{4r})_{ij} +  \mathcal{O}(\Delta r^4) +  \Delta \theta^2 (w_{4r})_{ij}  
       + \mathcal{O}(\Delta \theta^2 \Delta r^2)
\end{align*}
Therefore, 
\begin{align*}
D_{4r}^2U^2_{ij} 
= (u_{4r})_{ij} + \mathcal{O}(\Delta r^2) +  \mathcal{O}(\Delta \theta^2) 
\end{align*}
and part (a) of the lemma is proved.
The proofs of parts (b) and (c) are completely analogous.
\end{proof}

\begin{proof} (Proof of Theorem \ref{Th1})

First, we rewrite $\mathcal{H}_5^4 U^4_{ij}$ as 
\begin{align}
 \mathcal{H}_5^4 U^4_{ij} = &
 \left( D_{2r}^2 U^4_{ij} - \frac{\Delta r^2}{12}D_{4r}^2U^2_{ij} \right) +
   \frac{1}{r_i}\left(D_{r}^2 U^4_{ij} - \frac{\Delta r^2}{6}D_{3r}^2U^2_{ij} \right) \nonumber\\
 &+ \frac{1}{r_i^2}\left(D_{2\theta}^2 U^4_{ij} 
- \frac{\Delta \theta^2}{12}D_{4\theta}^2U^2_{ij} \right) + k^2 { \bar U}_{ij}.\nonumber
 \end{align}
Next, we apply  $ \mathcal{H}_5^4$ to 
 $u$ satisfying (\ref{Uhat}).
 This is followed by expressing each of the discrete derivatives of $u$ in terms of their corresponding continuous derivatives plus their leading order truncation errors, which leads to
 \begin{align*}
\mathcal{H}_5^4 u_{ij} &= 
\left((u_{rr})_{ij} + \frac{\Delta r^2}{12}\left(u_{4r}\right)_{ij} + 
 {\mathcal O}(\Delta r^4) - \frac{\Delta r^2}{12}D_{4r}^2U^2_{ij} \right)
\\
&\quad+ \frac{1}{r_i}\left((u_{r})_{ij} + \frac{\Delta r^2}{6}\left(u_{3r}\right)_{ij} 
+{\mathcal O}(\Delta r^4) - \frac{\Delta r^2}{6}D_{3r}^2U^2_{ij} 
\right)\\
&\quad+ \frac{1}{r_i^2} \left( (u_{\theta\theta})_{ij} + \frac{\Delta \theta^2}{12}\left(u_{4\theta}\right)_{ij} 
+ {\mathcal O}(\Delta \theta^4) - \frac{\Delta \theta^2}{12}D_{4\theta}^2U^2_{ij}  \right)
+ k^2 { u}_{ij}.
\end{align*}
Reordering the righthand side terms yields
\begin{align*}
 \mathcal{H}_5^4 u_{ij}& =(u_{rr})_{ij} + \frac{1}{r_i}(u_{r})_{ij} +\frac{1}{r_i^2} (u_{\theta\theta})_{ij} +k^2 u_{ij} 
  - \frac{\Delta r^2}{12} \left(  D_{4r}^2U^2_{ij}  -   \left(u_{4r}\right)_{ij} \right) \\
  &- \frac{\Delta r^2}{6} \left(  D_{3r}^2U^2_{ij}  -   \left(u_{3r}\right)_{ij} \right) 
  - \frac{\Delta \theta^2}{12} \left(  D_{4\theta}^2U^2_{ij}  -   \left(u_{4\theta}\right)_{ij} \right) 
+ {\mathcal O}(\Delta r^4) + {\mathcal O}(\Delta \theta^4).
\end{align*}
Then, by applying the statements of Lemma \ref{Lemma1} to the above expression, we get

\begin{align*}
 \mathcal{H}_5^4 u_{ij} = 
  \left(\Delta_{r\theta} u + k^2 u\right)_{ij}
  + \mathcal{O}(\Delta r^4)  + \mathcal{O}(\Delta \theta^4)  + {\mathcal O}(\Delta r^2\Delta \theta^2),
  \end{align*}
which finishes the proof.
\end{proof}

Therefore, the new numerical scheme (\ref{4thorder}) approximates the Helmholtz equation to fourth order, while maintaining a 5-point stencil on the unknown discrete function $U^4_{ij}$. We show in Section \ref{Section.Numerics2D} that there are important savings in the required storage and computational time compared to the 9-point standard centered fourth order finite difference approximation of the Helmholtz equation. This is one of the virtues of the DC technique developed in this work.

\subsection{Fourth order DC approximation at the artificial boundary} \label{ssec:3.2}

At the absorbing boundary, the radial derivatives of the scattered field $u$, present in (\ref{BVPBd4}) and (\ref{BVPBd5}), 
are approximated using standard centered second order finite differences. Then, to increase their accuracy to fourth order via DC, we subtract from these standard finite differences their leading order truncation error  terms. Imitating the previous procedure employed for the Helmholtz equation at the interior points, these leading  order terms are approximated using a previously calculated second order numerical solution $U^2_{ij}$ of the exact solution $u$ of the BVP (\ref{BVPBd1})-(\ref{Recurrence2}).
As a result, we obtain the following discrete non-homogeneous equations at the artificial boundary $r=R$:
\begin{align}
&\frac{U^4_{N+1,j} - U^4_{N-1,j}}{2\Delta r} -\,\,\partial_{r} \left( H_0(kr)\sum_{l=0}^{L-1} 
\frac{F^4_{lj}}{r^l} + H_1(kr)\sum_{l=0}^{L-1} \frac{G^4_{lj}}{r^l}\right)_{r=r_N=R} 
=\frac{\Delta r^2}{6} 
(Dl)_{3r}^2U^2_{N,j} \label{BC44NH}\\
&\frac{U^4_{N+1,j} - 2U^4_{Nj}+U^4_{N-1,j}}{\Delta r^2} - \,\,\partial_{r}^2\left( H_0(kr)\sum_{l=0}^{L-1} \frac{F^4_{lj}}{r^l} + H_1(kr)\sum_{l=0}^{L-1} \frac{G^4_{lj}}{r^l}\right)_{r=r_N=R}
= \frac{\Delta r^2}{12}(Dl)_{4r}^2 U^2_{N,j}, \label{BC54NH}
\end{align}
The forcing terms in (\ref{BC44NH}) and (\ref{BC54NH}) are defined from one-sided second order finite difference approximations $(Dl)_{3r}^2U^2_{N,j}$ and $(Dl)_{4r}^2 U^2_{N,j}$
of $\left(u_{3r}\right)_{Nj}$ and $\left(u_{4r}\right)_{Nj}$, respectively. More precisely,
\begin{align}
&(Dl)_{3r}^2U^2_{N,j} \equiv \frac{1}{\Delta r^3}\left[ \frac{1}{2}U^2_{N-3,j} - 3U^2_{N-2,j} +6U^2_{N-1,j}
-5U^2_{N,j} + \frac{3}{2}U^2_{N+1,j}
\right] \label{SideDer1}\\
&(Dl)_{4r}^2U^2_{N,j} \equiv \frac{1}{\Delta r^4}\left[ -U^2_{N-4,j} + 6 U^2_{N-3,j} - 14U^2_{N-2,j} +16U^2_{N-1,j}
-9U^2_{N,j} + 2U^2_{N+1,j}
\right]. \label{SideDer2}
\end{align}
Notice that the equations (\ref{BC44NH})- (\ref{BC54NH}) involve values of the discrete approximations, $U^4_{ij}$ and $U^2_{ij}$, at the  ghost points $(r_{N+1},\theta_j)$. The unknowns $U^4_{N+1,j}$ also appear in the fourth order approximation of the Helmholtz equation (\ref{4thorder}) evaluated at $i=N$ for $j=1\dots m$. We eliminate it by solving for $U^4_{N+1,j}$ in (\ref{BC44NH}) and substituting it into (\ref{BC54NH}), and into (\ref{4thorder}) evaluated at $i=N$. 
A similar procedure is employed to eliminate the 
ghost values $U^2_{N+1,j}$ during the first step, i.e., as part of the numerical solution of the second order scheme of the KFE-BVP (\ref{BVPBd1})-(\ref{Recurrence2}).

Analogously, we construct deferred-correction fourth order recursion formulas by keeping 
the second order terms of the truncation errors obtained by approximating  the angular derivatives in (\ref{Recurrence1})-(\ref{Recurrence2}) using second order centered finite differences. In fact,
\begin{align}
&2 l G^4_{lj} - (l-1)^2 F^4_{l-1,j } - \frac{F^4_{l-1,j+1} - 2 F^4_{l-1,j }+F^4_{l-1,j-1}}{\Delta \theta^2}
= -\frac{\Delta \theta^2}{12}D_{4\theta}^2 F^2_{l-1,j},\label{RecurrDiscr1}\\
&2 l F^4_{lj} + l^2 G^4_{l-1,j } + \frac{G^4_{l-1,j+1} - 2 G^4_{l-1,j }+G^4_{l-1,j-1}}{\Delta \theta^2}
= \frac{\Delta \theta^2}{12}D_{4\theta}^2 G^2_{l-1,j},\label{RecurrDiscr2}
&\end{align}
where $F^2_{l-1,j}$ and $G^2_{l-1,j}$, obtained in the first step, are second order approximations of $F_{l-1,j}$ and $G_{l-1,j}$ which are part of the exact solution
of the original scattering BVP. Also, the discrete differential operator $D_{4\theta}^2 $ is defined by equation (\ref{D4theta}) of the previous section.

\begin{Remark}
The proofs that the finite difference formulas (\ref{BC44NH})-(\ref{BC54NH}) and (\ref{RecurrDiscr1})-(\ref{RecurrDiscr2}) approximate their continuous counterparts to fourth order are very similar to the proof of Theorem \ref{Th1}. Therefore, they are omitted. The key assumption for these proof is  that the discrete functions $U^2_{N,j}$,  $F^2_{l-1,j}$ and $G^2_{l-1,j}$,
which are obtained in step 1, are second order approximations of $u(R,\theta)$, $F_{l-1} (\theta)$, and 
$G_{l-1} (\theta)$, respectively.
\end{Remark}

Summarizing, the set of algebraic equations (\ref{4thorder}), (\ref{BC44NH})-(\ref{BC54NH}), (\ref{RecurrDiscr1})-(\ref{RecurrDiscr2}), the discrete versions of the continuity of the scattered field (\ref{BVPBd3}) and the boundary condition at the obstacle (\ref{BVPBd2}) form the fourth order Karp DC system of linear equations to be solved. We denote this system as {\it KDC4}. Details on the structure and solution of this linear system are given in Section \ref{Implementation}.

\subsection{General high order DC schemes for the Helmholtz equation in polar coordinates}\label{ssec:3.4}
The derivation of the fourth order DC scheme for the BVP (\ref{BVPBd1})-(\ref{Recurrence2}) modeled by the 
Helmholtz equation in subsections \ref{ssec:3.1}-\ref{ssec:3.2}, can be extended to obtain a scheme of arbitrary high order.
To start this derivation, 
we assume that $U^{(p-2)}_{ij}$ is a $(p-2)$th order discrete approximation
 of the solution $u$ of the Helmholtz equation (\ref{BVPBd1}) subject to the boundary conditions (\ref{BVPBd2})-(\ref{Recurrence2}). The details on the computation of this discrete approximation will be discussed later. 
Then, we apply  the standard centered second order finite difference approximation ${\mathcal{H}}_5^{2}$ of the Helmholtz operator to the solution $u$ and retain up to the $(p-2)$ leading order terms of the truncation errors of each Helmholtz derivative. As a result, we obtain
\begin{align}
{\mathcal{H}}^{2}_5u_{ij}&= \frac{u_{i+1,j} - 2 u_{ij} +u_{i-1,j}}{h^2} +
\frac{1}{r_i}\frac{u_{i+1,j} - u_{i-1,j}}{2\,h} 
+\frac{1}{r^2} \frac{u_{i,j+1} - 2 u_{ij} +u_{i,j-1}}{h^2} + k^2 u_{ij}=\nonumber\\ 
&- \left( \frac{1}{3!r_i}(u_{3r} )_{ij}+
\frac{2}{4!}(u_{4r})_{ij} + 
\frac{2}{4!r_i^2}(u_{4\theta})_{ij}\right)h^2 \nonumber \\
&- \left( \frac{1}{5!r_i}(u_{5r} )_{ij}+
\frac{2}{6!}(u_{6r})_{ij} + 
\frac{2}{6!r_i^2}(u_{6\theta})_{ij}\right)h^4  \ldots \nonumber \\
&- \left( \frac{1}{(p-1)!r_i}(u_{(p-1)r} )_{ij}+
\frac{2}{p!}(u_{pr})_{ij} + 
\frac{2}{p!r_i^2}(u_{p\theta})_{ij}\right) h^{p-2} + \mathcal{O}(h^p), \label{pscheme}
 \end{align}
where $p=4,6, \dots$, and $h\equiv\text{max}\{\Delta r, \Delta \theta\}$.

We continue the construction of the $p$th order DC approximation to the Helmholtz equation by imitating the one used to obtain the 4th order DC approximation (\ref{4thorder}). In fact, we proceed by subtracting all the error terms up to the  $(p-2)$th order from the middle member of the equation (\ref{pscheme}). 
This is followed by substituting the continuous derivatives present in these error terms, by 
appropriate finite difference approximations acting on the $(p-2)$th order discrete approximation, $U^{p-2}_{ij}$, of the exact solution $u$.
 More precisely, for $q=4,6,8,\dots,p$, we replace the continuous derivatives of the exact solution $\left(u_{qr}\right)_{ij}$, $\left(u_{(q-1)r}\right)_{ij}$, and $\left(u_{q\theta}\right)_{ij}$ in (\ref{pscheme}) by a
 $(p+2-q)$th order finite difference approximations given by 
 $D_{qr}^{p+2-q}U^{p-2}_{ij}$, $D_{(q-1)r}^{p+2-q}U^{p-2}_{ij}$, and $D_{q\theta}^{p+2-q} U^{p-2}_{ij}$, respectively. 
 This construction suggests the definition of 
 the following
$p$th order DC finite difference approximation to the Helmholtz differential operator,
\begin{align}
{\mathcal{H}}^{p}_5U^p_{ij}&\equiv \frac{U^p_{i+1,j} - 2 U^p_{ij} +U^p_{i-1,j}}{h^2} +
\frac{1}{r_i}\frac{U^p_{i+1,j} - U^p_{i-1,j}}{2\,h} 
+\frac{1}{r^2} \frac{U^p_{i,j+1} - 2 U^p_{ij} +U^p_{i,j-1}}{h^2} + k^2 U^p_{ij}\nonumber\\ 
&-\left( \frac{1}{3!r_i}D_{3r}^{p-2}U^{p-2}_{ij} +
\frac{2}{4!}D_{4r}^{p-2}U^{p-2}_{ij} + 
\frac{2}{4!r_i^2}D_{4\theta}^{p-2}U^{p-2}_{ij}\right)h^2 \nonumber \\
&-\left( \frac{1}{5!r_i}D_{5r}^{p-4}U^{p-2}_{ij} +
\frac{2}{6!}D_{6r}^{p-4}U^{p-2}_{ij} + 
\frac{2}{6!r_i^2}D_{6\theta}^{p-4}U^{p-2}_{ij}\right)h^4
- \ldots \nonumber \\
&-\left( \frac{1}{(p-1)!r_i}D_{(p-1)r}^{2}U^{p-2}_{ij} +
\frac{2}{p!}D_{pr}^{2}U^{p-2}_{ij} + 
\frac{2}{p!r_i^2}D_{p\theta}^{2}U^{p-2}_{ij}\right)h^{p-2}. \label{pthorder}
 \end{align}

We claim that the equation, ${\mathcal{H}}^{p}_5U^p_{ij}=0$, is consistent with the Helmholtz equation (\ref{BVPBd1}) of order  
$\mathcal{O}(h^{p})$. This is the content of our next theorem whose proof is provided below. The formulas for the discrete differential operators acting on $U^{p-2}_{ij}$, i.e., $D_{qr}^{p+2-q}U^{p-2}_{ij}$, $D_{(q-1)r}^{p+2-q}U^{p-2}_{ij}$, and $D_{q\theta}^{p+2-q} U^{p-2}_{ij}$ for arbitrary $p$ and $q=4,\dots p$, can be obtained by applying computational algorithms such as {\it fdcoeffF.m} written as a MATLAB function by Leveque \cite{LevequeBook}. As an illustrative example, we define the operators needed  to obtain a sixth order DC scheme, $p=6$ and $q=4,6$ in the \ref{AppendixA}.

Notice that the new finite difference operator $\mathcal{H}^p_5$ in (\ref{pthorder}) acts on the unknown discrete function ${U}^p_{ij}$, generating the same 5-point stencil of the standard centered second order discrete operator $\mathcal{H}^2_5$. The difference is in the additional known terms which depend on the numerical approximation $U^{p-2}_{ij}$ of the exact solution $u$. They are given by
\begin{align}
\left( \frac{1}{3!r_i}D_{3r}^{p-2}U^{p-2}_{ij} \right.&\left.+
\frac{2}{4!}D_{4r}^{p-2}U^{p-2}_{ij} + 
\frac{2}{4!r_i^2}D_{4\theta}^{p-2}U^{p-2}_{ij}\right)h^2
+\ldots \nonumber \\
&+\left( \frac{1}{(p-1)!r_i}D_{(p-1)r}^{2}U^{p-2}_{ij} +
\frac{2}{p!}D_{pr}^{2}U^{p-2}_{ij} + 
\frac{2}{p!r_i^2}D_{p\theta}^{2}U^{p-2}_{ij}\right)h^{p-2} \nonumber
\end{align}

\begin{Remark}
Near the artificial boundary $r = R$, we use appropriate one-sided finite difference to approximate the various derivatives present in the leading terms of the truncation error in (\ref{pthorder}).
\end{Remark}

\begin{Theorem}\label{Th2}
The new DC numerical scheme,
\begin{align}
{\mathcal{H}}^{p}_5U^{p}_{ij}\equiv {\mathcal H}^2_5 U^{p}_{ij} 
&-\left( \frac{1}{3!r_i}D_{3r}^{p-2}{U}^{p-2}_{ij} +
\frac{2}{4!}D_{4r}^{p-2}{U}^{p-2}_{ij} + 
\frac{2}{4!r_i^2}D_{4\theta}^{p-2}{U}^{p-2}_{ij}\right)h^2  \nonumber \\
&- \left( \frac{1}{5!r_i}D_{5r}^{p-4}{U}^{p-2}_{ij} +
\frac{2}{6!}D_{6r}^{p-4}{U}^{p-2}_{ij} + 
\frac{2}{6!r_i^2}D_{6\theta}^{p-4}{U}^{p-2}_{ij}\right)h^4
- \ldots \nonumber \\
&-\left( \frac{1}{(p-1)!r_i}D_{(p-1)r}^{2}{U}^{p-2}_{ij} +
\frac{2}{p!}D_{pr}^{2}{U}^{p-2}_{ij} + 
\frac{2}{p!r_i^2}D_{p\theta}^{2}{U}^{p-2}_{ij}\right)h^{p-2}=0,
\label{pthorderT}
 \end{align}
 is a consistent finite difference approximation of the Helmholtz equation in polar coordinates
$$  \Delta_{r\theta} u + k^2 u
= u_{rr} + \frac{1}{r} u_r + \frac{1}{r^2}u_{\theta\theta} +k^2 u=0, $$
of order $\mathcal{O}(h^{p})$ on $\Omega^{-}$, if the continuous function $u$ has derivatives of order $p$ in its two variables $r$ and $\theta$ on $\Omega^{-}$; and if
the discrete function $U^{p-2}_{ij}$ is a $(p-2)th$ order approximation of $u$, i.e.,
there exists $z(r,\theta)$ sufficiently smooth and bounded on the closure of $\Omega^{-}$ such that 
\begin{equation}
U^{p-2}_{ij} =u(r_i,\theta_j) +h^{p-2}z(r_i,\theta_j) \label{Up2}, \qquad\qquad\qquad\mbox{with}\quad h=\max \{\Delta r,\Delta \theta\}.
\end{equation}
\end{Theorem}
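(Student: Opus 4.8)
The plan is to follow the same two-part template used for Theorem~\ref{Th1}: first an analogue of Lemma~\ref{Lemma1} describing how each deferred-correction difference operator behaves when applied to the lower-order approximant $U^{p-2}_{ij}$, and then substitution of these estimates into the expansion (\ref{pscheme}).

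\emph{The lemma (analogue of Lemma~\ref{Lemma1}).} For every even $q$ with $4 \le q \le p$ and every $s \in \{q-1, q\}$, I would show
\begin{align*}
D_{sr}^{\,p+2-q} U^{p-2}_{ij} &= (u_{sr})_{ij} + \mathcal{O}(h^{p+2-q}) + \mathcal{O}(h^{p-2}), \\
D_{q\theta}^{\,p+2-q} U^{p-2}_{ij} &= (u_{q\theta})_{ij} + \mathcal{O}(h^{p+2-q}) + \mathcal{O}(h^{p-2}).
\end{align*}
The proof mirrors Lemma~\ref{Lemma1} verbatim: substitute (\ref{Up2}) and split $D_{sr}^{\,p+2-q} U^{p-2}_{ij} = D_{sr}^{\,p+2-q} u_{ij} + h^{p-2} D_{sr}^{\,p+2-q} z_{ij}$. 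The first term equals $(u_{sr})_{ij} + \mathcal{O}(h^{p+2-q})$ because $D_{sr}^{\,p+2-q}$ is, by construction (e.g. via {\it fdcoeffF.m}), a centered $(p+2-q)$th-order finite difference approximation of $\partial_r^{\,s}$; the second term equals $h^{p-2}(z_{sr})_{ij} + h^{p-2}\,\mathcal{O}(h^{p+2-q}) = \mathcal{O}(h^{p-2})$ since $z$ is smooth and bounded on $\overline{\Omega^{-}}$. Adding the two yields the claim, and the angular operator $D_{q\theta}^{\,p+2-q}$ is treated identically.

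\emph{Insertion into the scheme.} Applying $\mathcal{H}_5^{p}$ to $u$ satisfying (\ref{Up2}) and using (\ref{pscheme}) for the $\mathcal{H}_5^{2} u_{ij}$ part, the $k$-th deferred-correction block ($k = 1, \dots, (p-2)/2$, so that $q = 2k+2$ and the weight is $h^{q-2} = h^{2k}$) contributes
\begin{align*}
h^{q-2}\Bigl[&\tfrac{1}{(q-1)!\,r_i}\bigl(D_{(q-1)r}^{\,p+2-q} U^{p-2}_{ij} - (u_{(q-1)r})_{ij}\bigr) + \tfrac{2}{q!}\bigl(D_{qr}^{\,p+2-q} U^{p-2}_{ij} - (u_{qr})_{ij}\bigr) \\
&\quad + \tfrac{2}{q!\,r_i^2}\bigl(D_{q\theta}^{\,p+2-q} U^{p-2}_{ij} - (u_{q\theta})_{ij}\bigr)\Bigr].
\end{align*}
By the lemma the bracket is $\mathcal{O}(h^{p+2-q}) + \mathcal{O}(h^{p-2})$, so after multiplication by $h^{q-2}$ the whole block is $\mathcal{O}(h^{p}) + \mathcal{O}(h^{p+q-4})$, which is $\mathcal{O}(h^{p})$ because $q \ge 4$. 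Hence each explicit $h^{2k}$-order term of (\ref{pscheme}) is cancelled up to an $\mathcal{O}(h^{p})$ remainder, and combining with the $\mathcal{O}(h^{p})$ tail already present in (\ref{pscheme}) gives $\mathcal{H}_5^{p} u_{ij} = (\Delta_{r\theta} u + k^2 u)_{ij} + \mathcal{O}(h^{p})$, which is the asserted consistency.

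\emph{Main obstacle.} The only delicate point is the bookkeeping of the two distinct error sources inside each correction block --- the intrinsic truncation error $\mathcal{O}(h^{p+2-q})$ of the reduced-order operator and the perturbation error $\mathcal{O}(h^{p-2})$ inherited from the $h^{p-2}z$ component of $U^{p-2}_{ij}$ --- and verifying that, weighted by the correct power $h^{q-2}$ dictated by the scheme, both land in $\mathcal{O}(h^{p})$; this is precisely where $q \ge 4$ is used, since $p+q-4 \ge p$ would fail otherwise. I would also note that if one wants the complete hierarchy $U^{2} \to U^{4} \to \cdots \to U^{p}$ rather than an abstractly given $U^{p-2}$, the argument is recast as an induction on $p$ in steps of two, with Theorem~\ref{Th1} as the base case and each inductive step using exactly the estimate above to promote a $(p-2)$th-order approximant to a $p$th-order one.
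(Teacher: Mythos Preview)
Your proposal is correct and follows essentially the same approach as the paper: an auxiliary lemma (the paper's Lemma~\ref{Lemma2}) estimating $D_{sr}^{\,p+2-q}U^{p-2}_{ij}-(u_{sr})_{ij}$ via the splitting $U^{p-2}_{ij}=u_{ij}+h^{p-2}z_{ij}$, followed by substitution into the expansion (\ref{pscheme}) so that each correction block, weighted by $h^{q-2}$, lands in $\mathcal{O}(h^{p})$. The only cosmetic difference is that the paper absorbs your $\mathcal{O}(h^{p-2})$ term into $\mathcal{O}(h^{p+2-q})$ already at the lemma stage (valid since $q\ge4$), whereas you carry both terms through and collapse them after the $h^{q-2}$ weighting; the content is identical.
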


The following lemma is the analogue of Lemma \ref{Lemma1} for a $p$ ordered scheme.

\begin{Lemma}\label{Lemma2}
For $U^{p-2}_{ij}$ and $u(r,\theta)$ satisfying the hypothesis of Theorem \ref{Th2}, it holds that the standard $(p+2-q){\text{th}}$ order centered finite difference operator:
\begin{enumerate}[(i)]
\item  $D^{p+2-q}_{qr}$ acting on $U^{p-2}_{ij}$ is consistent with the derivative $u_{qr}$ of order $\mathcal{O}(h^{p+2-q})$, \\for $q=4,6\dots,p$.
\medskip

\item  $D^{p+2-q}_{(q-1)r}$ acting on $U^{p-2}_{ij}$ is consistent with the derivative $u_{(q-1)r}$ of order $\mathcal{O}(h^{p+2-q})$, \\for $q=4,6\dots,p$.

\medskip

\item  $D^{p+2-q}_{q\theta}$ acting on $U^{p-2}_{ij}$ is consistent with the derivative $u_{q\theta}$ of order $\mathcal{O}(h^{p+2-q})$, \\for $q=4,\dots,p$.
\end{enumerate}
 \end{Lemma}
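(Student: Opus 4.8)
The plan is to mimic exactly the argument used for Lemma~\ref{Lemma1}, now carried out uniformly in the order parameter $p$. The core observation is that each of the finite difference operators $D^{p+2-q}_{qr}$, $D^{p+2-q}_{(q-1)r}$ and $D^{p+2-q}_{q\theta}$ is, by construction (e.g. via \texttt{fdcoeffF.m}), a standard centered finite difference stencil that is consistent of order $\mathcal{O}(h^{p+2-q})$ with the respective continuous derivative when applied to a sufficiently smooth function. So the only thing to check is what happens to the error when these operators act not on a smooth $u$ but on the perturbed grid function $U^{p-2}_{ij} = u_{ij} + h^{p-2} z_{ij}$ from hypothesis~(\ref{Up2}).

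First I would substitute (\ref{Up2}) into the operator and use linearity, for part (i):
\begin{align*}
D^{p+2-q}_{qr} U^{p-2}_{ij} = D^{p+2-q}_{qr} u_{ij} + h^{p-2}\, D^{p+2-q}_{qr} z_{ij}.
\end{align*}
For the first term, consistency of the stencil with $u_{qr}$ to order $p+2-q$ gives $D^{p+2-q}_{qr} u_{ij} = (u_{qr})_{ij} + \mathcal{O}(h^{p+2-q})$, which requires $u$ to have derivatives of order $q + (p+2-q) = p+2$ — here one should note that the theorem hypothesis only asks for derivatives of order $p$, so the natural (and standard, cf. Lemma~\ref{Lemma1}) reading is that the leading truncation term $\tfrac{1}{\text{const}} h^{p+2-q}(u_{(p+2)r})_{ij}$ is simply absorbed into the $\mathcal{O}(h^{p})$ budget of the final scheme; I would phrase the lemma's conclusion as consistency of order $\mathcal{O}(h^{p+2-q})$ with exactly this understanding, mirroring how part (a) of Lemma~\ref{Lemma1} was stated and used. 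For the second term, $z$ is smooth and bounded on $\overline{\Omega^-}$, so $D^{p+2-q}_{qr} z_{ij} = (z_{qr})_{ij} + \mathcal{O}(h^{p+2-q}) = \mathcal{O}(1)$; hence $h^{p-2} D^{p+2-q}_{qr} z_{ij} = \mathcal{O}(h^{p-2})$. Adding the two contributions,
\begin{align*}
D^{p+2-q}_{qr} U^{p-2}_{ij} = (u_{qr})_{ij} + \mathcal{O}(h^{p+2-q}) + \mathcal{O}(h^{p-2}) = (u_{qr})_{ij} + \mathcal{O}(h^{p+2-q}),
\end{align*}
where the last equality uses $p-2 \ge p+2-q$, i.e. $q \ge 4$, which holds for every $q$ in the stated range. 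This proves part (i); parts (ii) and (iii) are verbatim the same computation with $q$ replaced by $q-1$ in the radial case and with $\theta$-derivatives in place of $r$-derivatives, the only change being bookkeeping of which index the stencil differentiates in.

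The one genuinely delicate point — and the only place the argument is not purely mechanical — is the inequality $p-2 \ge p+2-q$ that makes the perturbation term harmless. This is why the scheme is organized so that $D^{p+2-q}_{qr}$ multiplies $h^{q-2}$ in (\ref{pthorderT}): the perturbation contributes $h^{q-2}\cdot h^{p-2} = h^{p+q-4} = \mathcal{O}(h^p)$ since $q\ge 4$, exactly matching the target order. I would remark at the end (as the paper does after Lemma~\ref{Lemma1}) that once this lemma is in hand, Theorem~\ref{Th2} follows by applying $\mathcal{H}^p_5$ to $u$, Taylor-expanding each second-order stencil to exhibit its truncation series up to order $h^{p-2}$, and then replacing each continuous derivative in those error terms by the corresponding $D$-operator acting on $U^{p-2}_{ij}$ via this lemma — the $h^{q-2}$ weighting guarantees every discrepancy is $\mathcal{O}(h^p)$, leaving $\mathcal{H}^p_5 u_{ij} = (\Delta_{r\theta} u + k^2 u)_{ij} + \mathcal{O}(h^p)$.
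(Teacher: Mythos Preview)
Your proposal is correct and follows essentially the same route as the paper: substitute $U^{p-2}_{ij}=u_{ij}+h^{p-2}z_{ij}$ from (\ref{Up2}), apply the operator by linearity, invoke consistency of the stencil on $u$ to get $\mathcal{O}(h^{p+2-q})$, bound the $z$-term by $\mathcal{O}(h^{p-2})$, and absorb the latter into the former via $q\ge 4$. Your explicit mention of the inequality $p-2\ge p+2-q$ and the remark on the smoothness assumption are useful clarifications that the paper leaves implicit, but the argument is the same.
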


\begin{proof}
To prove (i), we apply  $D^{p+2-q}_{qr}$ to $U^{p-2}_{ij}$ and use (\ref{Up2}). This leads to
\begin{align*}
D^{p+2-q}_{qr}U^{p-2}_{ij} &= 
D^{p+2-q}_{qr} u_{ij} + h^{p-2} D^{p+2-q}_{qr} z_{ij} \\
&= (u_{qr})_{ij} + \mathcal{O}(h^{p+2-q}) + h^{p-2} (z_{qr})_{ij} +  \mathcal{O}(h^{2p-q})\nonumber
\end{align*}
Therefore, 
\begin{align}
D^{p+2-q}_{qr}U^{p-2}_{ij} 
= (u_{qr})_{ij} + \mathcal{O}(h^{p+2-q}),
\end{align}
for $q=4,6\dots,p$. The proofs of parts (ii) and (iii) follow the same pattern.
\end{proof}

Now, we are ready to prove Theorem \ref{Th2}

\begin{proof} (Proof of Theorem \ref{Th2})\\
First, we rewrite $\mathcal{H}^{p}_5 { U}_{ij}$ as 
\begin{align}
 \mathcal{H}^p_5 { U}_{ij} = &
 D_{2r}^2 { U}_{ij} -\frac{2h^2}{4!}D_{4r}^{p-2}U^{p-2}_{ij} - \frac{2h^4}{6!}D_{6r}^{p-4}U^{p-2}_{ij} - \ldots -\frac{2h^{p-2}}{p!}D_{pr}^2U^{p-2}_{ij} \nonumber\\
 &+  \frac{1}{r_i}\left(D_{r}^2 {U}_{ij} -\frac{h^2}{3!}D_{3r}^{p-2}U^{p-2}_{ij} - \frac{h^4}{5!}D_{5r}^{p-4}U^{p-2}_{ij} - \ldots - \frac{h^{p-2}}{(p-1)!}D_{(p-1)r}^2U^{p-2}_{ij}\right) \nonumber\\
 &+ \frac{1}{r_i^2}\left(D_{2\theta}^2 {U}_{ij} 
- \frac{2h^2}{4!}D_{4\theta}^{p-2}U^{p-2}_{ij} - \frac{2h^4}{6!}D_{6\theta}^{p-4}U^{p-2}_{ij}-\ldots - \frac{2h^{p-2}}{p!}D_{p\theta}^2U^{p-2}_{ij}\right) + k^2 { U}_{ij}.\nonumber
 \end{align}
  Then applying $\mathcal{H}^p_5$ to $u$, which satisfy (\ref{Up2}),
 and expanding the individual terms where the discrete operators act on $u_{ij}$ results in
\begin{align*}
\mathcal{H}^{p}_5 u_{ij} &= 
(u_{rr})_{ij} + \frac{2h^2}{4!}\left(u_{4r}\right)_{ij} + \ldots + \frac{2h^{p-2}}{p!}(u_{pr})_{ij} + {\mathcal O}(h^p) \\
&\quad  -\frac{2h^2}{4!}D_{4r}^{p-2}U^{p-2}_{ij} - \ldots -\frac{2h^{p-2}}{p!}D_{pr}^2U^{p-2}_{ij} 
\\
&\quad+ \frac{1}{r_i}\left((u_{r})_{ij} + \frac{h^2}{3!}\left(u_{3r}\right)_{ij} + \ldots +  \frac{h^{p-2}}{(p-1)!}\left(u_{(p-1)r}\right)_{ij} +{\mathcal O}(h^p)\right. \\
&\quad \left. - \frac{h^2}{3!}D_{3r}^{p-2}U^{p-2}_{ij} - \ldots
-  \frac{h^{p-2}}{(p-1)!}D_{(p-1)r}^2U^{p-2}_{ij}\right)\\
&\quad+  \frac{1}{r_i^2}\left( (u_{\theta\theta})_{ij} + \frac{2h^2}{4!}\left(u_{4\theta}\right)_{ij} + \ldots + \frac{2h^{p-2}}{p!}(u_{p\theta})_{ij} + {\mathcal O}(h^p)\right. \\
&\quad \left. -\frac{2h^2}{4!}D_{4\theta}^{p-2}U^{p-2}_{ij} - \ldots -\frac{2h^{p-2}}{p!}D_{p\theta}^2U^{p-2}_{ij} \right) 
+ k^2 { u}_{ij}.
\end{align*}
Reordering and appropriately combining terms yields 
\begin{align*}
 \mathcal{H}^p_5 u_{ij}& =(u_{rr})_{ij} + \frac{1}{r_i}(u_{r})_{ij} +\frac{1}{r_i^2} (u_{\theta\theta})_{ij} +k^2 u_{ij} \\
  &- \frac{2h^2}{4!} \left(  D_{4r}^{p-2}U^{p-2}_{ij}  -   \left(u_{4r}\right)_{ij} \right) - \ldots
  - \frac{2h^{p-2}}{p!}\left(D_{pr}^2U^{p-2}_{ij} - (u_{pr})_{ij} \right) \\
  &- \frac{h^2}{3!r_i} \left(  D_{3r}^{p-2}U^{p-2}_{ij}  -   \left(u_{3r}\right)_{ij} \right)  - \ldots
     - \frac{h^{p-2}}{(p-1)!r_i} \left(  D_{(p-1)r}^2U^{p-2}_{ij}  -   \left(u_{(p-1)r}\right)_{ij} \right) \\
  &- \frac{2h^2}{4!r_i^2} \left(  D_{4\theta}^{p-2}U^{p-2}_{ij}  -   \left(u_{4\theta}\right)_{ij} \right)  - \ldots
  - \frac{2h^{p-2}}{p!r_i^2}\left(D_{p\theta}^2U^{p-2}_{ij} - (u_{p\theta})_{ij} \right) + {\mathcal O}(h^p).
\end{align*}
Thus, applying the statements of Lemma \ref{Lemma2} to each of the expressions in parentheses yields

\begin{align*}
 \mathcal{H}^p_5 u_{ij} = 
  \left(\Delta_{r\theta} u + k^2 u\right)_{ij}  + \mathcal{O}(h^p),
  \end{align*}
which finishes the proof.
\end{proof}

\subsection{Arbitrary order DC approximation for the KFE.}\label{ssec:3.5}
Following the derivation described in the previous three sections, we can obtain arbitrary order DC approximations for the KFE at the grid points $(r_N,\theta_j)$ by using appropriate one-sided finite difference for the derivatives present in the truncation error terms. For instance, the definitions of the $p$th order approximations for  
(\ref{BVPBd4}) and (\ref{BVPBd5}) are natural extensions of (\ref{BC44NH})-(\ref{BC54NH}). They consists of adding discrete approximations up to the $p$th order to all the continuous derivatives present in the truncation error terms . In fact,
\begin{align}
\frac{{U}^p_{N+1,j} - { U}^p_{N-1,j}}{2\Delta r} &-\,\,\partial_{r} \left( H_0(kr)\sum_{l=0}^{L-1} \frac{{ F}^p_{lj}}{r^l} + H_1(kr)\sum_{l=0}^{L-1} \frac{{ G}^p_{lj}}{r^l}\right)_{r=r_N=R}\label{BC4pNH} \\
&=\frac{h^2}{3!}(Dl)_{3r}^{p-2}U^{p-2}_{ij} +
\frac{h^4}{5!}(Dl)_{5r}^{p-4}U^{p-2}_{ij} + \ldots +
\frac{h^{p-2}}{(p-1)!}(Dl)_{(p-1)r}^2U^{p-2}_{ij} \nonumber\\
\frac{{U}^p_{N+1,j} - 2{ U}^p_{Nj}+{ U}^p_{N-1,j}}{\Delta r^2} &- \,\,\partial_{r}^2\left( H_0(kr)\sum_{l=0}^{L-1} \frac{{ F}^p_{lj}}{r^l} + H_1(kr)\sum_{l=0}^{L-1} \frac{{G}^p_{lj}}{r^l}\right)_{r=r_N=R}\label{BC5pNH}\\
&=\frac{2h^2}{4!}(Dl)_{4r}^{p-2}U^{p-2}_{ij} +
\frac{2h^4}{6!}(Dl)_{6r}^{p-4}U^{p-2}_{ij} + \ldots +
\frac{2h^{p-2}}{p!}(Dl)_{pr}^2U^{p-2}_{ij}. \nonumber
\end{align}
In the discrete equations (\ref{BC4pNH})-(\ref{BC5pNH}), we employ $(p+2-q)$th order discrete finite difference operators, $(Dl)_{(q-1)r}^{p+2-q}$ and $(Dl)_{qr}^{p+2-q}$ (for $q=4,6,\cdots,p$) acting on the discrete function $U^{p-2}_{ij}$ approximating $u$ of order ($p-2$)th. The use of $Dl$ instead of $D$ states that left one-sided finite difference approximations of the corresponding continuous derivatives are used.

Analogously, we formulate $p$th order approximations of the recurrence formulas as
\begin{align}
2 l {G}^p_{lj} - (l-1)^2 {F}^p_{l-1,j } &- \frac{{ F}^p_{l-1,j+1} - 2 { F}^p_{l-1,j }+{ F}^p_{l-1,j-1}}{\Delta \theta^2}
\label{RecurrDiscr1p}\\
&=-\frac{2h^2}{4!}D_{4\theta}^{p-2}F^{p-2}_{l-1,j} -
\frac{2h^4}{6!}D_{6\theta}^{p-4}F^{p-2}_{l-1,j} - \ldots -
\frac{2h^{p-2}}{p!}D_{p\theta}^2F^{p-2}_{l-1,j},\nonumber \\
2 l { F}^p_{lj} + l^2 { G}^p_{l-1,j } &+ \frac{{G}^p_{l-1,j+1} - 2 { G}^p_{l-1,j }+{ G}^p_{l-1,j-1}}{\Delta \theta^2}\label{RecurrDiscr2p}\\
&= \frac{2h^2}{4!}D_{4\theta}^{p-2}G^{p-2}_{l-1,j} +
\frac{2h^4}{6!}D_{6\theta}^{p-4}G^{p-2}_{l-1,j} + \ldots +
\frac{2h^{p-2}}{p!}D_{p\theta}^2G^{p-2}_{l-1,j},\nonumber
\end{align}
where $F^{p-2}_{l-1,j}$ and $G^{p-2}_{l-1,j}$ are part of the previously calculated ($p-2$)th ordered numerical solution, and the discrete operators $D_{q\theta}^{p+2-q}$ are centered finite difference operators. Summarizing, the set of equations (\ref{pthorder}), (\ref{BC4pNH})-(\ref{BC5pNH}), (\ref{RecurrDiscr1p})-(\ref{RecurrDiscr2p}), the discrete version of the continuity of the scattered field (\ref{BVPBd3}), and the appropriate discretization of the boundary condition at the obstacle (\ref{BVPBd2}) form the $p$th order DC discrete system of equations to be solved. We denote this system as {\it KDCp}.


\section{Standard fourth order numerical method for the KFE-BVP}  \label{sec:4}
In this section, we formulate a standard fourth order numerical method for the KFE-BVP (\ref{BVPBd1})-(\ref{Recurrence2}) in polar coordinates.  
This constitutes an 
alternative high order method for this BVP. In Section \ref {Section.Numerics2D}, we compare it with the DC fourth order method and access convergence, accuracy and computational efficiency of both. This fourth order method is also a natural extension of the standard second order finite difference method, carefully constructed in \cite{JCP2017}, where the KFE  was first introduced. 

We begin by considering the centered 
9-point standard finite difference scheme for the Helmholtz equation in polar coordinates, 
\begin{align}
& {\mathcal{H}}_9 {\bar U}^4_{ij} \equiv \frac{-{\bar U}^4_{i+2,j} + 16 {\bar U}^4_{i+1,j} - 30 {\bar U}^4_{i,j} +
  16 {\bar U}^4_{i-1,j} - {\bar U}^4_{i-2,j}}{12 \Delta r^2} +
  \frac{1}{r_i}\left(\frac{-{\bar U}^4_{i+2,j} + 8 {\bar U}^4_{i+1,j} - 8 {\bar U}^4_{i-1,j} + {\bar U}^4_{i-2,j}}{12 \Delta r}\right)        \nonumber \\
&\qquad + \frac{1}{r_i^2}\left(\frac{-{\bar U}^4_{i,j+2} + 16{\bar U}^4_{i,j+1} - 30{\bar U}^4_{i,j} + 16{\bar U}^4_{i,j-1} - {\bar U}^4_{i,j-2}}{12 \Delta \theta^2}\right)+ k^2 {\bar U}^4_{ij}=0. \label{H4thorder}
 \end{align}
at the interior gridlines $r_2 < r_i < r_{N-1}$. We adopt the alternative notation for the standard fourth order numerical solution $\bar{U}^4_{ij}$, because it is different from its DC counterpart $U^4_{ij}$, as confirmed by our numerical results in the next section.
At the boundaries of the domain, appropriate fourth order one-sided finite difference approximations of the various derivatives of the Helmholtz equation are required.

We also need non-centered fourth order finite difference approximations for the various equations of the KFE. For instance,
\begin{enumerate}[i)]
\item Continuity of the first derivative at the artificial boundary:
\begin{align}\label{Stand4drCont1Der}
&\frac{1}{\Delta r}\left[\frac{1}{4} \bar{U}^4_{N+1,j} +\frac{5}{6} \bar{U}^4_{N,j} - \frac{3}{2}\bar{U}^4_{N-1,j} + \frac{1}{2} \bar{U}^4_{N-2,j} -\frac{1}{12} \bar{U}^4_{N-3,j}
\right]\\
&\qquad\qquad\qquad\qquad\qquad\qquad\qquad-\,\partial_{r} \left( H_0(kr)\sum_{l=0}^{L-1} \frac{{\bar F}^4_{lj}}{r^l} + H_1(kr)\sum_{l=0}^{L-1} \frac{{\bar G}^4_{lj}}{r^l}\right)_{r=r_N} = 0. \nonumber 
\end{align}

\item Continuity of the second derivative at the artificial boundary:
\begin{align}\label{Stand4drCont2Der}
&\frac{1}{\Delta r^2}\left[\frac{5}{6} \bar{U}^4_{N+1,j} -\frac{5}{4} \bar{U}^4_{N,j} - \frac{1}{3} \bar{U}^4_{N-1,j} + \frac{7}{6} \bar{U}^4_{N-2,j} -\frac{1}{2} \bar{U}^4_{N-3,j} +\frac{1}{12} \bar{U}^4_{N-4,j}
\right]\\
&\qquad\qquad\qquad\qquad\qquad\qquad\qquad- \,\,\partial_{r}^2\left( H_0(kr)\sum_{l=0}^{L-1} \frac{{\bar F}^4_{lj}}{r^l} + H_1(kr)\sum_{l=0}^{L-1} \frac{{\bar G}^4_{lj}}{r^l}\right)_{r=r_N} = 0. \nonumber 
\end{align}

\item Standard fourth order discretization of the recursion formulas along the angular direction: 
\begin{align}
&2 l {\bar G}^4_{lj} - (l-1)^2 {\bar F}^4_{l-1,j } - \frac{-{\bar F}^4_{l-1,j+2} + 16 {\bar F}^4_{l-1,j+1} - 30 {\bar F}^4_{l-1,j} +
  16 {\bar F}^4_{l-1,j-1} - {\bar F}^4_{l-1,j-2}}{12 \Delta r^2} = 0,\\
&2 l {\bar F}^4_{lj} + l^2 {\bar G}^4_{l-1,j } + \frac{-{\bar G}^4_{l-1,j+2} + 16 {\bar G}^4_{l-1,j+1} - 30 {\bar G}^4_{l-1,j} +
  16 {\bar G}^4_{l-1,j-1} - {\bar G}^4_{l-1,j-2}}{12 \Delta r^2} 
= 0,\label{RecurrenceKS4}
&\end{align}
\end{enumerate}
for $l=1,\dots L-1$. Again, we have adopted the alternative notation for the standard fourth order numerical solutions $\bar{F}^4$, and $\bar{G}^4$ to differentiate them from their DC fourth order counterparts ${F}^4$, and ${G}^4$, respectively.
Notice that we have retained the values of the unknown functions at the ghost points $(r_{N+1},\theta_j)$ in all the one-sided finite difference approximations. As a consequence another set of unknowns is added to the problem. However, they are eliminated considering an additional set of equations given by the discretization of the Helmholtz equation (\ref{H4thorder}) at the nodes located on the artificial boundary  $r=r_N$. 
Our numerical experiments suggests that this practice leads to a more accurate and stable numerical solutions than just using typical one-sided finite differences.
The set of equations (\ref{H4thorder})-(\ref{RecurrenceKS4}) and the discrete version of the continuity $u$ (\ref{BVPBd3}) at the artificial boundary form the standard 4th order method for the KFE-BVP  (\ref{BVPBd1})-(\ref{Recurrence2}). We will denote this method as {\it KS4}.
 
At the computational level, this fourth order standard method reduces to a new linear system of equations (LSE) given by 
\begin{equation}
A_4{\bar{\bf U}}^4={\bf b}. \label{LSES4}
\end{equation}
 This matrix $A_4$ has a greater number of nonzero entries than 
the matrix associated to the fourth order DC method described above.
 As a consequence, memory and computing costs increase for this standard formulation. In Section \ref{Section.Numerics2D}, we compare both fourth order techniques through some numerical experiments. 

\section{Implementation of the DC numerical method coupled with the KFE absorbing boundary condition}
\label{Implementation}
The practical advantage of the DC method coupled with appropriate discretizations of the KFE is that it leads to arbitrary high  order numerical approximations to the solution of scattering BVPs, such as (\ref{BVPBd1})-(\ref{Recurrence2}). Here, we choose an obstacle of circular shape to alleviate the transformation of the KFE-BVP into the ultimate linear system. In  Section \ref{Conclusions}, we discuss the scattering from arbitrarily shaped scatterers. 
Our strategy to generate a  $p$th order DC numerical approximation ${U}^p_{ij}$  to the exact solution ${u}$ of the BVP (\ref{BVPBd1})-(\ref{Recurrence2}) can be summarized by the following steps: 

\begin{enumerate}[i)]
\item \label{step1} Obtain a second order approximation ${ U}_{ij}^{2}$ to the exact solution $u$ using a standard second order finite difference technique for the Helmholtz equation and the BCs. This technique was adopted in \cite{JCP2017}. As shown there, the set of discrete equations employed to obtain ${ U}_{ij}^{2}$ can be recast into the LSE, 
\begin{equation}
A_2{\bf U}^2={\bf b}. \label{LSE2}
\end{equation}
 In particular,
 the vector ${\bf U}^2$ consists of the unknown discrete value approximations of the scattered field ${u}$, and the unknown angular coefficients of the Karp's expansion for a given grid. The vector 
$\bf b$ is assembled from the boundary data at the obstacle generated from the incident wave. More precisely, the unknown vector ${\bf U}^2$ is defined as
\begin{eqnarray}
&&{\bf U}^2 =
\Big[ \overbrace{U^2_{1,1} ...U^2_{1,m}}^{\text{at obstacle }} ~ ~
\overbrace{U^2_{2,1} ...U^2_{2,m}...
U^2_{N-1,1}...U^2_{N-1,m}}^{\text{at interior grid points}} ~ ~\nonumber\\
&&\qquad\qquad\qquad\overbrace{F^2_{0,1}...F^2_{0,m}\, G^2_{0,1}...G^2_{0,m}...\,F^2_{L-1,1}...F^2_{L-1,m}\,G^2_{L-1,1}...G^2_{L-1,m}}^{\text{at artificial boundary}}
 \Big]^{T}, \label{VectorU2}
\end{eqnarray}
and the vector {\bf b}, for a Dirichlet boundary condition on the obstacle, as 
\begin{equation}
{\bf b} =
\Big[ \overbrace{-(u_{inc})_{1,1} ... -(u_{inc})_{1,m}}^{\text{at obstacle }} ~ 
      \overbrace{0 ... 0 ... 0 ... 0}^{\text{at interior grid points}} ~ 
      \overbrace{0 ... 0 ... 0 ... 0}^{\text{at artificial boundary}}
 \Big]^{T}, \label{Vectorb_stand2}
\end{equation} 
As a result, the matrix $A_2$ dimension is $(N-1+2L)m\times(N-1+2L)m$,
where the first $m\times m$ block corresponds to the identity matrix. In the case of a Neumann BC, minor updates to the $m$ first equations should be made. In this case, $A_2$ is affected by the ghost-point based treatment of the discretization of the radial derivative in (\ref{BVPBd2}), and $\bf b$ depends on the boundary data $\partial_{r} \uinc$. See details in \ref{AppendixB}. 

The sparse structure of the matrix $A_2$ is studied in \cite{JCP2017}, where built-in MATLAB linear solvers were employed to obtain second order accurate solutions. In fact, the complex matrix $A_2$ is non-Hermitian. The discretization of the KFE and the ordering of the discrete unknowns in ${\bf U}^2$ lead to a highly asymmetric block structure of the lower rows of $A_2$. A typical structure of $A_2$ is shown in Fig. \ref{fig:matrixbands}. This particular matrix corresponds to the first experiment described in Section \ref{Section.Numerics2D} and illustrated in Fig. \ref{fig:Scattering}. As can be seen in Fig. \ref{fig:matrixbands}, the matrix $A_2$ consists of mainly five diagonals, which are obtained from the 5-point scheme used in the finite difference approximation of the Helmholtz equation. But, it also has a non-symmetric tail corresponding to the unknowns angular functions of the karp's expansion.

In \cite{JCP2017}, the LU MATLAB solvers were successfully used for an ample set of scattering problems whose discretization led to $A_2$ type matrices. In this work, we also employ the direct built-in MATLAB linear solvers for our two dimensional high order DC schemes. As can be seen in our numerical experiments in Section \ref{Section.Numerics2D}, we also obtained excellent high order approximations for very refined grids up to 60 PPW and up to a maximum of 12 terms in the Karp's fairfield expansion.

\item \label{step2} Construct a fourth order finite difference DC consistent scheme of the Helmholtz equation by subtracting the second order leading terms of the truncation errors
 from the second order standard scheme (\ref{2ndorder}). 
This leads to the desired 
fourth order finite difference deferred correction discrete equation (\ref{4thorder}) consistent with the Helmholtz equation. Likewise, obtain fourth order approximations to the 
boundary conditions  (\ref{BVPBd2})-(\ref{Recurrence2}). For this purpose, use appropriate approximations to the various continuous derivatives present in 
the leading order truncation error terms using the second order approximation $U_{ij}^2$ obtained in step (\ref{step1}).
This process is described in detail in sections \ref{ssec:3.1}-\ref{ssec:3.2}. The resulting linear system is given by
\begin{equation}
 A_2 {\bf U}^4 = {\bf b} + {\bf b}^4_{DC}({\bf U}^2).
 \label{LSE4}
\end{equation}
The unknown vector ${\bf U}^4$ is identical to ${\bf U}^2$ in (\ref{LSE2}), except in the replacement of the superscript number 2 by 4. 
Also, the dependence of the correction vector ${\bf b}^4_{DC}$ on the second order numerical solution has been made explicit. This new vector consists of all the corrections in  (\ref{4thorder}), (\ref{BC44NH}), (\ref{BC54NH}), (\ref{RecurrDiscr1}) and (\ref{RecurrDiscr2}). In the particular case of Dirichlet BC, it reads
\begin{eqnarray}
&&{\bf b}^4_{DC}({\bf U}^2) =
\Big[ \overbrace{0 \cdots 0}^{\text{at boundary }} ~ ~
\overbrace{ \frac{\Delta r^2}{12}\left(D_{4r}^2 U^2_{2,1} + 
\frac{2}{r_2}D_{3r}^2 U^2_{2,1}\right) +
\frac{\Delta \theta^2}{12r_2^2}D_{4\theta}^2 U^2_{2,1} \cdots}^{\text{at interior grid points}} ~ ~\nonumber\\
&&\qquad\overbrace{ \frac{\Delta r^2}{12}\left(D_{4r}^2 U^2_{N-1,m} + 
\frac{2}{r_{N-1}}D_{3r}^2 U^2_{N-1,m}\right) +
\frac{\Delta \theta^2}{12r_{N-1}^2}D_{4\theta}^2 U^2_{N-1,m}}^{\text{at interior grid points (continued)}} ~ ~\nonumber\\
&&\qquad\overbrace{\frac{\Delta r^2}{12}\left((Dl)_{4r}^2 U^2_{N,1} + 
\frac{2}{r_{N}}(Dl)_{3r}^2 U^2_{N,1}\right) +
\frac{\Delta \theta^2}{12r_N^2}(Dl)_{4\theta}^2 U^2_{N,1}-
\left( \frac{\Delta r}{3} + \frac{\Delta r^2}{6 r_N} \right)(Dl)_{3r}^2 U^2_{N,1}\cdots}^{\text{at artificial boundary}} ~ ~\nonumber\\ 
&&\qquad\overbrace{\frac{\Delta r^2}{12}\left((Dl)_{4r}^2 U^2_{N,m} + 
\frac{2}{r_{N}}(Dl)_{3r}^2 U^2_{N,m}\right) +
\frac{\Delta \theta^2}{12r_N^2}(Dl)_{4\theta}^2 U^2_{N,m}-
\left( \frac{\Delta r}{3} + \frac{\Delta r^2}{6 r_N} \right)(Dl)_{3r}^2 U^2_{N,m}}^{\text{at artificial boundary (continued)}} ~ ~\nonumber\\
&&\qquad\overbrace{\frac{\Delta r^2}{12}(Dl)_{4r}^2 U^2_{N,1}-\frac{\Delta r}{3}(Dl)_{3r}^2 U^2_{N,1}\cdots
\frac{\Delta r^2}{12}(Dl)_{4r}^2 U^2_{N,m}-\frac{\Delta r}{3}(Dl)_{3r}^2 U^2_{N,m}}^{\text{at artificial boundary (continued)}} ~ ~\nonumber\\
&&\qquad\overbrace{-\frac{\Delta \theta^2}{12}D_{4\theta}^2 F^2_{0,1}\cdots
-\frac{\Delta \theta^2}{12}D_{4\theta}^2 F^2_{0,m}\ 
\frac{\Delta \theta^2}{12}D_{4\theta}^2 G^2_{0,1}\cdots
\frac{\Delta \theta^2}{12}D_{4\theta}^2 G^2_{0,m} }^{\text{at artificial boundary (continued)}}\nonumber\cdots \\
&&\qquad\overbrace{-\frac{\Delta \theta^2}{12}D_{4\theta}^2 F^2_{L-1,1}\cdots
-\frac{\Delta \theta^2}{12}D_{4\theta}^2 F^2_{L-1,m}\ 
\frac{\Delta \theta^2}{12}D_{4\theta}^2 G^2_{L-1,1}\cdots
\frac{\Delta \theta^2}{12}D_{4\theta}^2 G^2_{L-1,m} }^{\text{at artificial boundary (continued)}}
 \Big]^{T}. \label{Vectorb4}
\end{eqnarray}
Alternatively, under Neumann conditions, the first $m$ components of this vector must account for the correction term in (\ref{BCneuDC4}, Appendix B) combined to those arising from the ghost-point boundary treatment. By solving the linear system (\ref{LSE4}), we obtain a fourth order numerical approximation, $U_{ij}^4$, of the solution $u$ of the original BVP. 
 An important aspect of this approach is that the matrix $A_2$ remains the same in both the second and fourth order computations. The only difference with respect to the LSE (\ref{LSE2}) occurs in the forcing term.

\item 
Continue the iterative construction process described in (\ref{step2}) until a desired  $p$th order approximation $U_{ij}^p$ of the exact solution $u$ is obtained  from the previous approximation $U_{ij}^{p-2}$. The associated linear system in this general case is given by
\begin{equation}
A_2 {\bf U}^p = {\bf b} + {\bf b}^p_{DC}({\bf U}^{p-2}).
\end{equation}
The components of the vector ${\bf b}^p_{DC}$ consists of the approximations of the high order correction terms  present in (\ref{pthorder}), (\ref{BC4pNH}), (\ref{BC5pNH}), (\ref{RecurrDiscr1p}) and (\ref{RecurrDiscr2p}). All of them are computed  from ${\bf U}^{p-2}$. The unknown vector ${\bf U}^{p}$ is identical to ${\bf U}^2$ in (\ref{LSE2}), by replacing 2 by $p$ in all its components. 
Therefore, to obtain a numerical solution approximating the exact solution two orders higher, it is required to solve an additional  linear system. However, all the linear systems to be solved using the DC technique have the same sparse matrix $A_2$. The difference is in the forcing terms as explained above. Compared to high order standard schemes, this DC implementation feature is advantageous regardless of the adopted LSE resolution strategy. In the case of direct solvers, $A_2$ factorization is performed once and then reused throughout the high order DC iteration. In the case of iterative solvers, all the linear systems to be solved as part of the DC iteration
use the same solving algorithm, under the same preconditioning and optimal implementation. Therefore, the resolution strategy requires a one-time computational investment over a rather simple matrix for every DC iteration step.

\end{enumerate}

In Section \ref{Section.Numerics2D}, we perform numerical experiments employing the discrete DC Helmholtz operators  $\mathcal{H}^4_5$  and  $\mathcal{H}^6_5$  of fourth and sixth order, respectively. As a result, we obtain numerical solutions  approximating the scattered field $u$ of the KFE-BVP (\ref{BVPBd1})-(\ref{Recurrence2}).  
By comparing these numerical solutions against the corresponding exact solutions  for circular scatterers, the expected fourth and sixth order of convergence, respectively, are achieved. For completeness, several of the sixth order finite difference formulas, used in this work, approximating continuous derivatives are given in the \ref{AppendixA}. These continuous derivatives 
are contained 
in the leading order truncation error terms of the finite difference approximations of the Helmholtz equation and the equations forming the KFE, respectively. Also, the deferred corrections approximations for the Neumann boundary condition are given in \ref{AppendixB}.

%

\section{Numerical Results} \label{Section.Numerics2D}

In this section, we discuss numerical results for the scattering of a time harmonic incident plane wave, $ u_{inc} = e^{-i\omega t} e ^{ikx}$ propagating in the direction of the positive x-axis, from a circular obstacle. We obtain numerical solutions for our proposed  deferred correction method coupled with the Karp's farfield expansion ABC, which is applied to the KFE-BVP defined by (\ref{BVPBd1})-(\ref{Recurrence2}). To access the high accuracy and high order of convergence of this technique, we compare our numerical results  to the exact solution of  the exterior BVP defined by (\ref{BVPsc1})-(\ref{BVPsc3}) for a circular obstacle of radius $r_0$ and wavenumber $k$. This exact solution for the scattered field $u$ and its corresponding {\it Farfield Pattern} (FFP), $P(\theta)$  are as follows:
\begin{enumerate}
\item Dirichlet boundary condition: $u=-u_{inc}$,
\begin{equation}
u (r,\theta) = - \sum_{n=0}^{\infty} \epsilon_n i^n \frac{J_n(kr_0)}{H_n^{(1)}(kr_0)} H_n^{(1)}(kr)\cos n\theta
\end{equation}
\begin{equation}
 P(\theta) = -\left(\frac{2}{k\pi}  \right)^{1/2}e^{-i\pi/4} \sum_{n=0}^{\infty} \epsilon_n \frac{J_n(kr_0)}{H_n^{(1)}(kr_0)} \cos n\theta, 
\end{equation}
\item Neumann boundary condition: $\partial_{n} u = - \partial_{n} \uinc,$
\begin{eqnarray}
u (r,\theta) = - \sum_{n=0}^{\infty} \epsilon_n i^n \frac{J_n'(kr_0)}{H_n'^{(1)}(kr_0)} H_n^{(1)}(kr)\cos n\theta
\end{eqnarray}
\begin{equation}
P(\theta) = -\left(\frac{2}{k\pi}  \right)^{1/2}e^{-i\pi/4} \sum_{n=0}^{\infty} \epsilon_n \frac{J_n'(kr_0)}{H_n'^{(1)}(kr_0)} \cos n\theta,
\end{equation}
\end{enumerate}
where $\epsilon_0=1$ and $\epsilon_n=2$ for $n\ge 1$

In Fig. \ref{fig:Scattering}, we show some numerical results obtained by applying the fourth order deferred correction technique KDC4, introduced in Sections \ref{ssec:3.1}-\ref{ssec:3.2}, to the KFE-BVP. 
The left graphs illustrate the amplitude of the total field, $u_{total}=u_{inc}+u$ for both boundary conditions.  The middle graphs shows the comparison of the numerical Farfield Patterns (defined below) against the exact ones. The rightmost graphs show the fourth order of convergence of the numerical solution to the exact solution. In both numerical experiments, the wavelength is $2\pi$, the number of Karp's expansion terms is 9, the outer radius is 3, and the number of gridpoints per wavelength to determine the order of convergence is $[20, 30, 40, 50, 60]$. The infinite series defining the exact solutions have been truncated to 60 terms for our calculations. The structure of the $A_2$ matrix for the Dirichlet BVP of Fig. \ref{fig:Scattering} with 20 gridpoints per wavelength is shown in Fig. \ref{fig:matrixbands}.

In the following subsections, we present numerous results for the application of the DC technique to the KFE-BVP for the two BCs under studied. Discussion of the order of convergence and accuracy of the DC techniques, along with comparisons against other high order techniques are included as well. 

\begin{figure}[!h]
\begin{subfigure}{0.37\textwidth}
\includegraphics[width=\linewidth, height=5.5cm ]{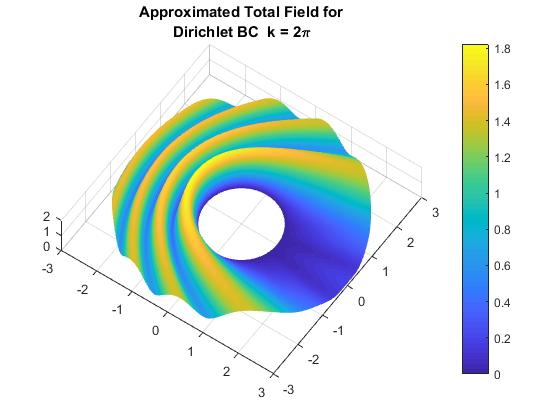} 
\end{subfigure}
\begin{subfigure}{0.37\textwidth}
\includegraphics[width=\linewidth, height=5.5cm]{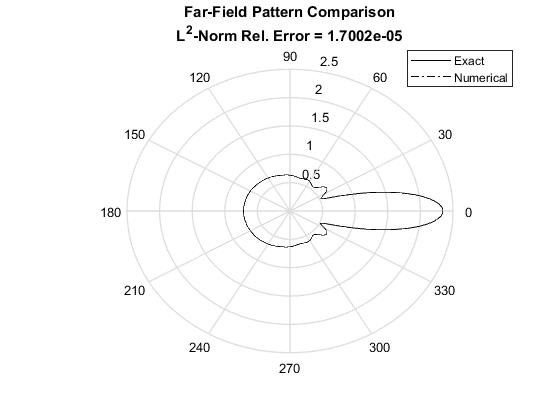}
\end{subfigure}
\begin{subfigure}{0.25\textwidth}
\includegraphics[width=\linewidth, height=4.cm ]{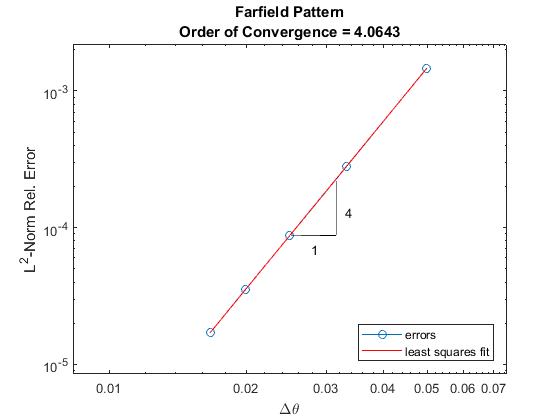}
\end{subfigure}
\begin{subfigure}{0.37\textwidth}
\includegraphics[width=\linewidth, height=5.5cm ]{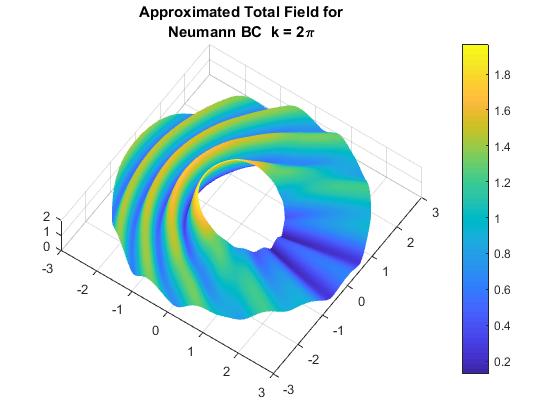} 
\end{subfigure}
\begin{subfigure}{0.37\textwidth}
\includegraphics[width=\linewidth, height=5.5cm]{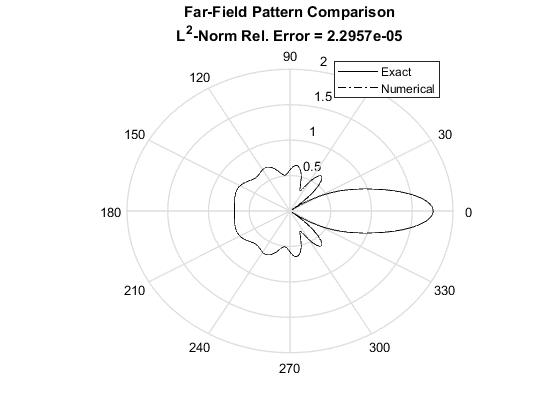}
\end{subfigure}
\begin{subfigure}{0.25\textwidth}
\includegraphics[width=\linewidth, height=4.cm ]{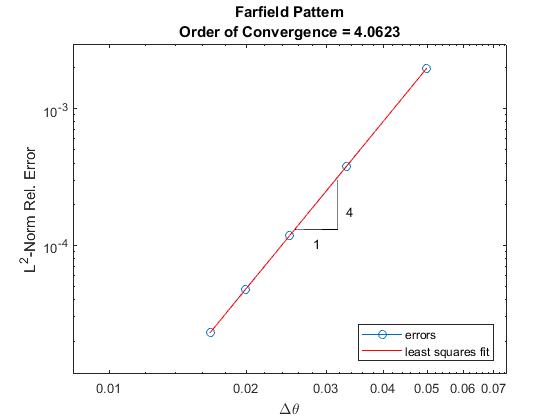}
\end{subfigure}
\vspace{-.7cm}
\caption{Numerical results for scattering from a circular scatterer using KFE. Shown from left to right are the wave amplitude, Farfield Pattern, and order of convergence for Dirichlet (top) and Neumann (bottom) BCs.} 
\label{fig:Scattering}
\end{figure}

\begin{figure}[h]
\begin{center}
\includegraphics[width=0.6\linewidth]{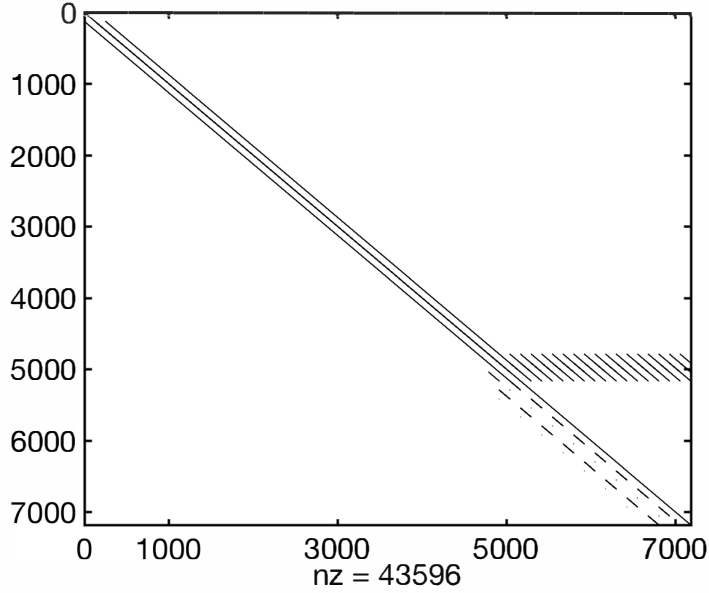} 
\end{center}
\vspace{-.7cm}
\caption{Matrix structure for Dirichlet BVP of  Figure \ref{fig:Scattering} with PPW=20 (grid size 40$\times$ 126) and NKFE=9.}
\label{fig:matrixbands}
\end{figure}

\subsection{Accuracy and convergence under Dirichlet BC} \label{ssec:6.1}

We perform several numerical experiments to obtain approximations of the FFP of the scattered wave for the BVP
(\ref{BVPBd1})-(\ref{Recurrence2}) under Dirichlet boundary conditions. 
The FFP is an important property to be analyzed in scattering problems. It depends on the shapes and physical properties of the scatterers.
In Section 4.2.1 of \cite{MartinBook}, Martin defines the Farfield Pattern (FFP) as the angular function present in the dominant term of the asymptotic expansions for  the scattered wave when $r\rightarrow\infty$. For instance, in 2D, the Farfield Pattern is the coefficient $f_0(\theta)$ of the leading order term of the asymptotic approximation of Karp expansion,
\begin{equation}
u(r,\theta)=
\frac{e^{ikr}}{(kr)^{1/2}}f_0(\theta)+O\left(1/(kr)^{3/2}\right). \label{KSFE}
\end{equation}
Following Bruno and Hyde \cite{McKaySIAM}, we  
calculate it from the  approximation of the scattered wave at 
the artificial boundary. A detailed computation is found in \cite{JCP2017}.

In our first set of experiments, we obtain the $L^2$ norm relative errors made by approximating the exact FFP by the numerical FFP using both the DC and standard techniques. 
In all these experiments,  
the wavenumber  is $k=2\pi$ and the radius of the circular obstacle is $r_0 =1$ while the radius of the artificial boundary is either $R=2$ or $R=3$. 
To determine the convergence rates of the numerical solutions to the exact solution,
 we refine our polar grid by increasing the number of gridpoints per wavelength (PPW) from 20 to 60.
These results are illustrated in Figs. \ref{fig:ConvergencePPW} and \ref{fig:OrderConv}. 
  In Fig. \ref{fig:ConvergencePPW}, we present four cases with varying number of Karp Expansion terms, NKFE = 4,8,10,13 while the grid is systematically refined. In these figures, we denote solutions using the DC methods of fourth and sixth order as KDC4 and KDC6, respectively. Also, those solutions obtained from the application of the standard second and fourth order schemes are referred as KS2 and KS4, respectively. 
  
 \begin{figure}[!ht]
\begin{subfigure}{0.5\textwidth}
\includegraphics[width=\textwidth]{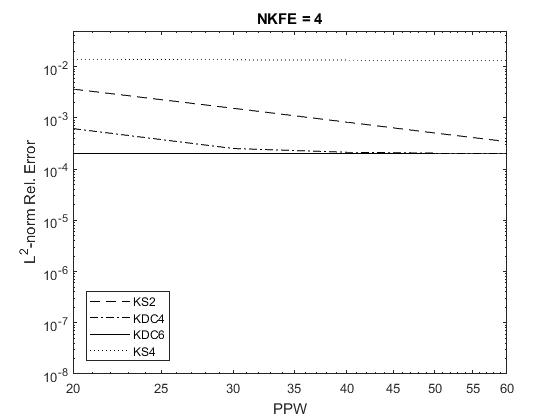} 
\end{subfigure}
\begin{subfigure}{0.5\textwidth}
\includegraphics[width=\textwidth]{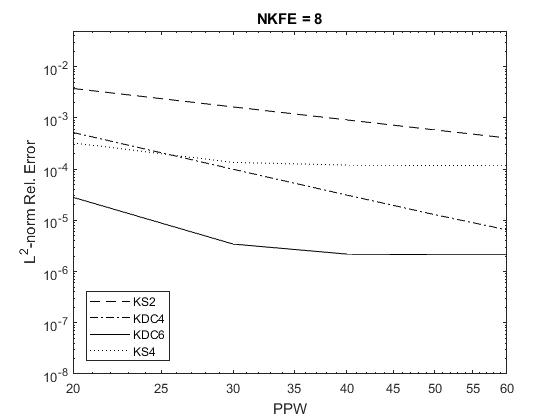}
\end{subfigure}
\begin{subfigure}{0.5\textwidth}
\includegraphics[width=\textwidth]{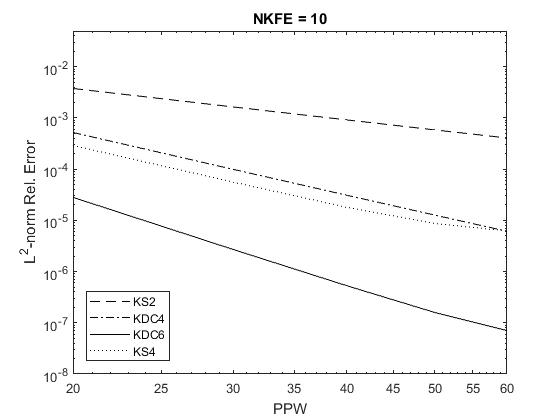}
\end{subfigure}
\begin{subfigure}{0.5\textwidth}
\includegraphics[width=\textwidth]{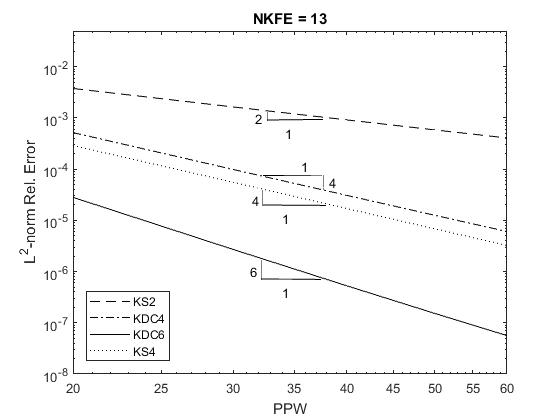}
\end{subfigure}
\caption{Comparison of L$^2$-norm relative errors of the Farfield Pattern computed from KS2, KDC4, KDC6 and KS4 methods for $R=2$} 
\label{fig:ConvergencePPW}
\end{figure}

\begin{figure}[h]
\begin{center}
\includegraphics[width=0.6\linewidth]{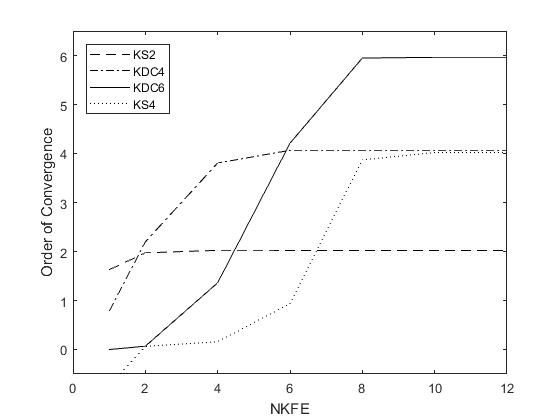} 
\end{center}
\caption{Comparison of convergence rates between the DC and standard methods in terms of NKFE. }
\label{fig:OrderConv}
\end{figure}

An analysis of the graphs in Fig. \ref{fig:ConvergencePPW} reveals how the accuracy and convergence rate of the standard and DC techniques, coupled with the KFE, depend on both PPW and the number of Karp's expansion terms. From the top left plot of Fig. \ref{fig:ConvergencePPW}, it is observed that the only method that attains the expected convergence rate (2nd order convergence), when NKFE =  4, is KS2.  
 Furthermore, KDC4 seems to attain the expected convergence rate only for coarser grids, while errors of the other methods remain nearly constant as PPW increases. As NKFE increases from 4 to 8 terms (top right plot of Fig. \ref{fig:ConvergencePPW}), we observe that KS2 continues attaining the second order rate, but now KDC4 also achieves the expected 4th order convergence rate along the full PPW range. However, KDC6 and KS4 convergence rates improve for the coarser grids, but as we continue refining, they degrade quickly.
 More consistent experimental results are attained by all methods for $10$ KFE terms, as depicted in the bottom left plot of Fig. \ref{fig:ConvergencePPW}, with the exception of KS4 whose order drops slightly below $4$ for the finer grids. Finally for NKFE = 13, all these methods attain their theoretical convergence rate, as shown in the bottom right plot of Fig. \ref{fig:ConvergencePPW}.

\begin{table}[h!]
\centering
\begin{tabular}{||c c c c c||}
\hline
   PPW    & Grid size & $h= r_{0} \Delta \theta = \Delta r$   & $L^2$-norm Rel. Error   & Observed order \\ [0.5ex]
   \hline\hline
$20$   & $40\times 126$  & $0.04987$  & $5.86\times 10^{-5}$  & $ $    \\
 $30$   & $60\times 189$  & $0.03324$  & $5.05\times 10^{-6}$  & $ 6.04$    \\
 $40$   & $80\times 252$  & $0.02493$  & $9.15\times 10^{-7}$  & $ 5.94$    \\
 $50$   & $100\times 315$  & $0.01995$  & $2.47\times 10^{-7}$  & $ 5.87$    \\
 $60$   & $120\times 377$  & $0.01667$  & $8.58\times 10^{-8}$  & $ 5.89$    \\
[1ex]
\hline
\end{tabular}
\caption{Order of convergence of the KDC6 method for Dirichlet BC when  k=2$\pi$, NKFE=13, and R=3.}
\label{table:6thorderR=3}
\end{table}

Alternatively, Fig. \ref{fig:OrderConv} shows the convergence rates attained by the DC and the standard methods as the number of terms in Karp expansion (NKFE) increases. In these experiments, the artificial boundary is located at $R=3$ and the various grids used to determine the convergence rate consist of PPW = 20, 30, 40, 50 and 60.
It is seen from Figs. \ref{fig:ConvergencePPW} and \ref{fig:OrderConv} that any of these four methods can reach its theoretical order of convergence, if enough terms in Karp's expansion are retained for sufficiently fine grids. 
This fact illuminates the arbitrary high order character of the KFE-ABC, whose accuracy is easily adjustable to the precision of the interior Helmholtz solver. In addition, the order of convergence of the proposed interior scheme can be efficiently increased by adding higher order error terms of the discretized Helmholtz differential operator into the DC numerical scheme. We present a detailed convergence process in Table \ref{table:6thorderR=3} for $k=2\pi$, NKFE=13 and R=3. This table clearly shows not only how the accuracy improves by refining the grid, but more important that the expected 6th order of convergence can be observed if enough terms are used in the Karp's expansion. 

The dependence of the L$^2$-norm FFP relative error on PPW and NKFE 
 is explored in Fig. \ref{fig:ErrorSurface} for the KDC6 method. Note that at coarser grids, increasing the number of Karp's expansion terms only leads to minor accuracy improvements. On the contrary, as the grids become more refined using larger NKFE, greater accuracy occurs. In fact, for NKFE $=8$, the L$^2$ relative error approaches $10^{-7}$ as the grid is refined. This represents a significant improvement over the results obtained using lower NKFE.
 
\begin{figure}[h]
\begin{center}
\includegraphics[width=1.2\linewidth]{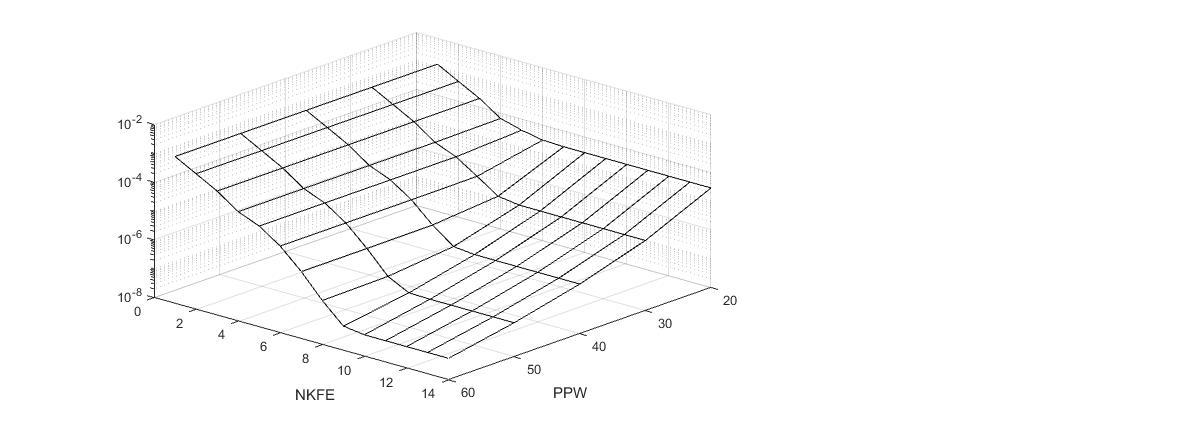} 
\end{center}
\caption{L$^2$-norm FFP relative error employing KDC6 method for various numbers of Karp's expansion terms and PPW values. Problem parameters are $r_0=1$, $k=2\pi$, $R=2$.}
\label{fig:ErrorSurface}
\end{figure}

\subsection{Comparison of computational times} \label{ssec:6.2}
We performed several experiments to evaluate the computational times spent by DC and Standard methods when solving similar problems to those in the previous section. The results of these experiments are 
depicted in Figure \ref{fig:time}. To obtain them, a set of baseline error tolerances are defined, and then pairs (NKFE,PPW) close to   optimal are found for each method to satisfy such thresholds. By an optimal pair we mean, the minimum values of NKFE and PPW that are needed to attain the target L$^2$-norm FFP relative error.
Once each pair (NKFE,PPW) is found, the same simulation is performed ten times. Then, the resulting CPU times are averaged and this time average is plotted in Fig. \ref{fig:time}.

For all simulations represented in Fig. \ref{fig:time}, the relevant parameters values are $r_0=1$, $k=2\pi$ and $R=3$. Also, the pairs needed (close to optimal) to attain the FFP target errors for the different methods are depicted in Table \ref {Table1}.
The curves on the left plot show that KS2 is not able to reach errors smaller than $10^{-4}$ during a time interval of 4 seconds, a threshold that is much earlier reached by the higher order methods, as depicted in the right plot.
In fact, it is observed that the time spent by KS2 to attain an error close to 10$^{-4}$ is about  8 times the one spent by KDC4. 
The limited KS2 accuracy prompted us to present two different plots with different time scales in Fig. \ref{fig:time}. 
We noticed that KDC6 is at least three times faster than both fourth order schemes, to reach an error of  $10^{-5}$. The KDC4 method was second best in terms of computational time. 
All these experiments were performed on an Intel Core i5 laptop computer of 8 GB RAM. The codes were written and executed in MATLAB R2017b. The results are not meant to represent our methods' limiting computational speed. Instead, these are presented to demonstrate the relative speed of the different methods described.

Notice, that we needed to choose $R=3$ to allow lower order methods to reach higher accuracies comparable to their higher order counterpart.

\begin{figure}[h!]
\begin{subfigure}{0.5\textwidth}
\includegraphics[width=\textwidth]{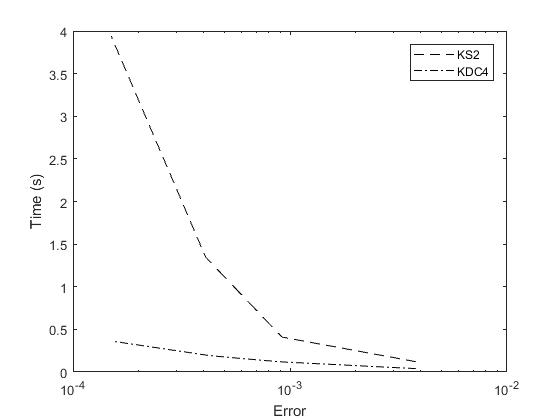} 
\end{subfigure}
\begin{subfigure}{0.5\textwidth}
\includegraphics[width=\textwidth]{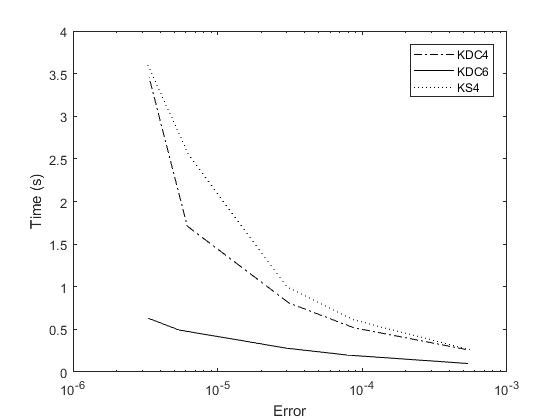}
\end{subfigure}
\caption{Computational times vs. L$^2$-norm FFP relative errors for the KS2, KDC4, KS4, and KDC6 methods.}
\label{fig:time}
\end{figure}

The sources of the time savings that are shown in Fig. \ref{fig:time} are most easily explained by examining Table \ref{Table1}. At all error levels, the DC6 scheme requires a much less refined grid and fewer Karp Expansion terms than DC4 or KS4. This leads to a smaller matrix and allows the scheme to be solved more efficiently despite the additional steps required. A similar phenomena can be seen when comparing DC4 and KS4 (especially in regards to the NKFE). The DC4 scheme have an additional efficiency advantage over the KS4 since the matrix used for solving the DC4 system is less dense than the matrix used by the KS4 method. As a note, the error values given in the table are approximations since it was not possible to find values of PPW and NKFE that would give identical errors for all three schemes. Additionally, the information displayed in Table \ref{Table1} corresponds to the right plot in Figure \ref{fig:time}, a similar table could be derived for the left plot.

\begin{table}[h!]
  \begin{center}
    \begin{tabular}{|c|c|c|c|c|c|c|}
    \hline
      \textbf{FFP Tol} & \multicolumn{2}{c|}{\textbf{KS4}} & \multicolumn{2}{c|}{\textbf{KDC4}} & \multicolumn{2}{c|}{\textbf{KDC6}}\\
      error & PPW & NKFE & PPW & NKFE &  PPW & NKFE\\
      \hline
      $\rule{0pt}{1.2em}5\times 10^{-4}$ & 17 & 8 & 20 & 8 & 13 & 2\\
      \hline
      $\rule{0pt}{1.2em}10^{-4}$ & 26 & 9 & 31 & 8 & 17 & 5\\
      \hline
      $\rule{0pt}{1.2em}5\times 10^{-5}$ & 35 & 9 & 40 & 8 & 20 & 6\\
      \hline
      $\rule{0pt}{1.2em}10^{-5}$ & 51 & 11 & 60 & 8 & 26 & 7\\
      \hline
      $\rule{0pt}{1.2em}5\times 10^{-6}$ & 60 & 11 & 70 & 8 & 29 & 8\\
       \hline
    \end{tabular}
    \caption{Points per wavelength  and KFE number of terms needed to reach a target FFP relative error.}
    \label{Table1}
  \end{center}
\end{table}

\subsection{Accuracy and convergence under Neumann BC} \label{ssec:6.3}
In this final section, we compare the FFP relative errors and convergence rates of fourth- and sixth-order DC solutions for the BVP (\ref{BVPBd1})-(\ref{Recurrence2}), under Neumann boundary conditions. For this boundary condition, the deferred correction schemes are detailed in the \ref{AppendixB}. In the following experiments, problem parameters, PPW, and NKFE, are the same ones used for the Dirichlet type tests in Section \ref{ssec:6.1}. In  Fig. \ref{fig:ConvergencePPWn}, we compare the FFP relative errors make by the application of KS2, KDC4 and KDC6 methods  for NKFE = $4, 8, 10$ and $13$, while 
in Fig. \ref{fig:OrderConvn}, we exhibit  the convergence rates dependence on NKFE. In both figures, the illustrated results nearly resemble the ones given in Figs. \ref{fig:ConvergencePPW} and \ref{fig:OrderConv}. Therefore, we conclude  that DC accuracy and convergence rates are not affected by the type of obstacle boundary condition. 

As in the Dirichlet case, the KDC4 and KDC6 methods require at least NKFE = 8 to fully reach their highest accuracy and expected convergence rates, respectively. Furthermore, for NKFE = 10 or 13, the KDC6 relative errors are two order of magnitude smaller than their KDC4 counterparts for sufficiently fine grids.
\begin{figure}[!ht]
\begin{subfigure}{0.5\textwidth}
\includegraphics[width=\textwidth]{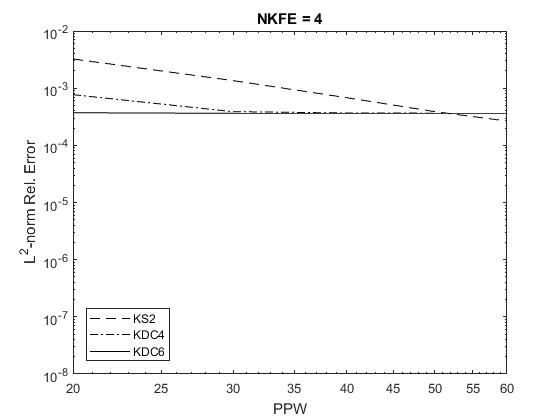} 
\end{subfigure}
\begin{subfigure}{0.5\textwidth}
\includegraphics[width=\textwidth]{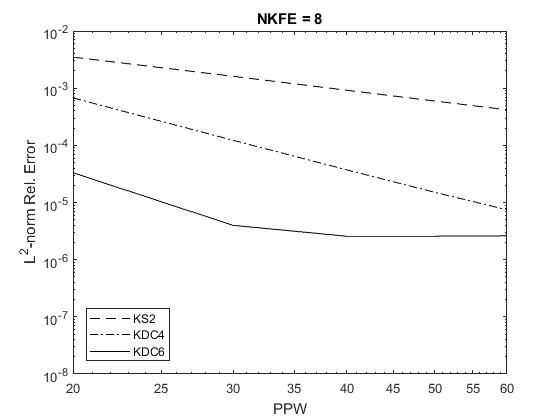}
\end{subfigure}
\begin{subfigure}{0.5\textwidth}
\includegraphics[width=\textwidth]{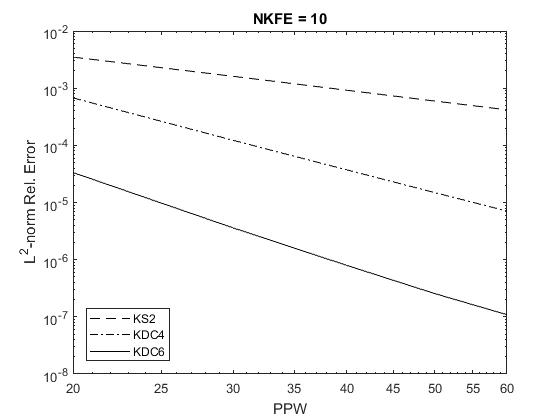}
\end{subfigure}
\begin{subfigure}{0.5\textwidth}
\includegraphics[width=\textwidth]{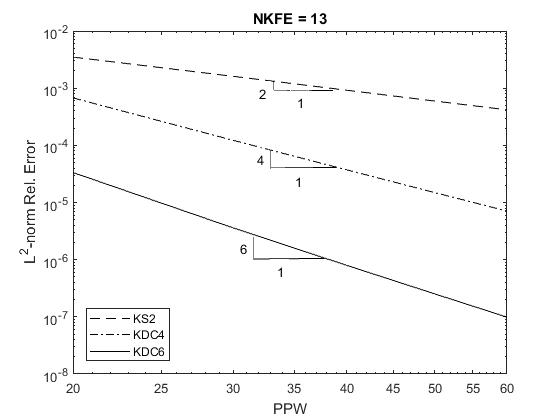}
\end{subfigure}
\caption{Comparison of L$_2$-norm FFP relative errors computed from KS2, KDC4 and KDC6 under Neumann BC.}
\label{fig:ConvergencePPWn}
\end{figure}

\begin{figure}[h]
\begin{center}
\includegraphics[width=0.6\linewidth]{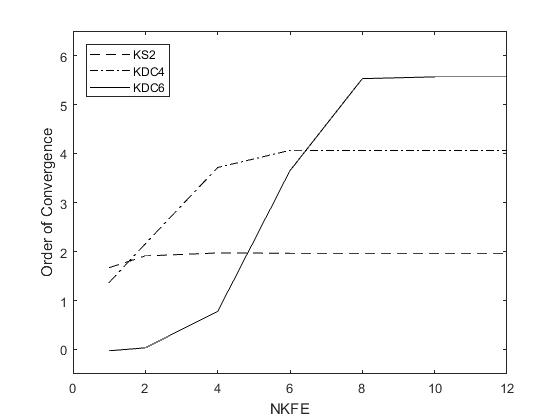} 
\end{center}
\caption{Comparison of convergence rates between the DC methods in terms of NKFE, under Neumann BC}
\label{fig:OrderConvn}
\end{figure}


\section{Concluding remarks and future work}
\label{Conclusions}

We have constructed an arbitrary high order numerical method for the two-dimensional time-harmonic acoustic scattering problem. This   is based on applying a general $p$th order DC technique to approximate the Helmholtz equation (interior approximation), and using an appropriate  number of terms (NKFE) for the the equations  (\ref{BVPBd3})-(\ref{Recurrence2}) defining the Karp's farfield expansion absorbing boundary condition. The KFE equations are also approximated by a $p$th order DC finite differences.

The DC approximation of the governing Helmholtz equation of arbitrary order $p$ is given by (\ref{pthorderT}), while the $p$th order DC approximation of the Karp's farfield expansion ABC with enough NKFE terms is given by the discrete equations  
 (\ref{BC4pNH})-(\ref{BC5pNH}), (\ref{RecurrDiscr1p})-(\ref{RecurrDiscr2p}). 
The algebraic linear system for the scattered field obtained from all these discretizations is completed  by
the discrete equations corresponding to the continuity of the scattered field (\ref{BVPBd3}) at the artificial boundary  and the appropriate discretization of the boundary condition at the obstacle (\ref{BVPBd2}). In the case case of Neumann or Robin condition, a
 $p$th order DC finite difference
discretization of the boundary condition at the obstacle should be constructed. 
The details in the construction of this DC scheme for the BVP  (\ref{BVPBd1})-(\ref{Recurrence2}) are given in Section \ref{ssec:3}. 

It is seen from Figs. \ref{fig:ConvergencePPW} - \ref{fig:ErrorSurface} and Figs. \ref{fig:ConvergencePPWn} - \ref{fig:OrderConvn} that the proposed method can reach its theoretical order of convergence, if enough terms in Karp's expansion are retained for sufficiently fine grids. This fact confirms the arbitrary high order character of the KFE-ABC that was claimed in \cite{JCP2017}. Actually, this high order property was already observed in \cite{CMAME2019} where the KFE-ABC was combined with a high order isogeometric finite element method employed as a Helmholtz solver for the interior. 

The virtue of this approach is that any  $p$th order approximation of the
Helmholtz equation consists of the same 5-point stencil which is used by the standard centered second order finite difference approximation.
 The difference between these two approximations is that the right hand side of the $p$th order scheme includes some additional terms that come from leading terms of the truncation error of the Helmholtz equation centered second order finite difference approximation.
  These new terms are calculated from a $(p-2)$th order numerical solution ${\bf U^{p-2}}$, previously computed. Hence, the proposed DC method is an iterative technique. For instance, the application of the DC fourth order method that leads to a fourth order approximation ${\bf U^{4}}$ is preceded by  the calculation of 
 ${\bf U^{2}}$ by applying a second order DC scheme.  Moreover, the matrix defining the LSE associated to both steps is the same matrix $A_2$. However, it is important to notice (as shown in Fig. \ref{fig:matrixbands})  that this matrix, although highly sparse, is not banded due to the presence of the unknown angular functions of the Karp's expansion.
  In Section \ref{Implementation}, we describe this iterative process in detail. 

In Section \ref{ssec:3}, we rigorously proved how our proposed DC finite difference method approximates the equations (\ref{BVPBd1})-(\ref{Recurrence2}), defining the KFE-BVP, to any order $p$. In Section \ref{Section.Numerics2D}, these theoretical results are experimentally confirmed by showing that the numerical solution obtained by applying KDC4 and KDC6,  indeed converge to the exact solution with a fourth and sixth order convergence rate, respectively. 

We accessed the computational effectiveness of our proposed technique by choosing target tolerance errors to be satisfied by the numerical FFP.  An analysis of the right plot in Fig. \ref{fig:time} reveals that KDC4 reaches the tolerance errors faster than its standard counterpart KS4. This is remarkable since the combined KDC4 technique requires the solution of two linear systems while KS4 needs to solve only one linear system. 
We attribute this performance to the greater sparsity of KDC4 matrix $A_2$ compared with the less sparse matrix $A_4$, associated to the  9-point standard finite difference approximation of Helmholtz equation. We also  notice that KS4 requires more KFE terms than KDC4 for similar grid sizes to reach a given tolerance error.  
But, more important KDC6 is eight times faster than KDC4, although its application has one more step in the iterative process. The reason for this is the coarser grid and lower number of KFE terms used by KDC6 compared with those employed by KDC4. For instance, KDC6 reaches an error close to $10^{-5}$ for (PPW,NKFE) = (26,7), while KDC4 needs (PPW,NKFE) = (60,8). 

In this work, we chose to limit our study to the two-dimensional Helmholtz equation in polar coordinates for clarity in the formulation and presentation of the theoretical results. However, we anticipate that an extension of the DC technique to the KFE-BVP in generalized curvilinear coordinates  will follow a similar pattern of what we have already observed in polar coordinates. We will base this extension on Villamizar and Acosta previous works on the grid generation for single and multiple scatterers of complexly shaped geometries \cite{ETNA2009,MATCOM2009}.  These grids correspond to generalized curvilinear coordinates conforming to the boundaries of the scatterers. Moreover, this grids were used by the same authors to solve single and multiple scattering problems  from complexly shaped obstacles in the following articles \cite{JCP2010,COMAME2012,JCP2017}. In particular in \cite{JCP2017}, they obtained second order convergence for a star shaped scatter with smooth boundary by using second order finite difference approximation based on curvilinear coordinantes conforming to the scatterer boundary. 

To provide some insight into the formulation of the DC finite difference technique in curvilinear coordinates, we consider 
 the Helmholtz equation in the curvilinear coordinates $(\xi,\eta)$, derived by Winslow,
\begin{equation}
\alpha u_{\xi \xi} - 2 \beta u_{\xi \eta} + \gamma u_{\eta \eta} + k^2 J^2 u = 0 \label{Winslow}
\end{equation}
where the coefficients $\alpha=\alpha(\xi,\eta)$, $\beta=\beta(\xi,\eta)$, $\gamma=\gamma(\xi,\eta)$ and $J=J(\xi,\eta)$ are defined by a coordinates transformation. If these coefficients are known exactly, then the deferred correction approach presented in this paper can be directly extended to (\ref{Winslow}) to approximate the derivatives $u_{\xi \xi}$, $u_{\xi \eta}$ and $u_{\eta \eta}$ up to a desired order. However, in practice the coefficients $\alpha$, $\beta$, $\gamma$ and $J$ are also approximated with finite difference schemes. Hence, in order to maintain the desired order of convergence, the coefficients $\alpha$, $\beta$, $\gamma$ and $J$ of the coordinates transformation must be approximated to match the same desired order for the truncation error. Our immediate plan is to attempt this extension to high order methods in curvilinear coordinates within the framework of DC methods. 

Another direction for future work will be to consider an extension of the DC technique for scattering problems in heterogeneous media (variable wavenumber) which can be handled by finite difference methods. In fact, the approach to be followed for the generalized curvilinear coordinates extension my also apply for this one. 

One more extension within our reach is the construction of the DC finite difference approach for the three-dimensional 
scattering modeled by the Helmholtz equation coupled with a high order local farfield expansions ABC. For this, we will start by considering 
the three-dimensional Helmholtz equation in spherical coordinates,
\begin{equation}
\Delta_{r,\theta,\phi}u + k^2 u = u_{rr} + \frac{2}{r} u_r + \frac{1}{r^2 \sin\theta}(\sin\theta\,\, u_{\theta})_{\theta} + \frac{1}{r^2\sin^2\theta}u_{\phi\phi} + k^2 u=0,\label{3D}
\end{equation}
coupled with the high order local farfield expansion ABC for the three-dimensional case (WFE), which was constructed in \cite{JCP2017} from the Wilcox's farfield expansion (\ref{Wilcox}). In this previous work,  it was shown that combining a standard centered second order finite difference method in the interior with the WFE at the artificial boundary, having enough terms, leads to a second order convergence rate of the numerical solution to the exact solution. Therefore, we have all the ingredients to derive a higher order DC finite difference method for the acoustic scattering problem WFE-BVP governed by (\ref{3D}). In fact, by approximating the derivatives present in (\ref{3D}) and in the WFE absorbing boundary condition, and keeping the necessary truncation errors terms, a DC approximation of any order can be derived. Nonetheless, these new DC developments may inevitably require iterative LSE resolution algorithms, suitable for large and sparse, non-Hermitian, and poorly conditioned (under large wavenumbers) matrices. The high demand of computer memory of direct solvers, make themselves infeasible choices in the case of 3-D refined grids. To downsize study efforts, we will consider some Krylov subspace methods for the Helmholtz equation, including e.g., the work by Kechroud et al.\cite{kechroud-2004}, Erlangga \cite{Erlangga-2008} and 
Gordon and Gordon \cite{Gordon-2013}.
At the discrete level, the local WFE transforms into a unique  sparse matrix structure, that highly motivates the exploration for efficient LSE solvers.
Furthermore, we want to emphasize that the DC procedure developed in this work is not limited to the BVP modeled by the Helmholtz equation. Indeed, it can be easily applied to any other BVP modeled by linear partial differential equations. 

 In its present form the KFE can only be applied to problems in the full-plane since its artificial boundaries need to be circles. However, we foresee that it can be adapted to wave problems in the half-plane with an acoustically soft or hard condition on the plane boundary. For this adaptation, we will use our recently developed multiple KFE absorbing boundary condition \cite{Waves2019}. In fact, we plan to use the method of images to extend the single scattering problem in the half-plane to a multiple scattering problem containing two identical scatterers in the full-plane. Then, we will use the multiple-KFE condition and symmetry relations between the outgoing waves to construct the KFE condition for the half-plane. Our purpose is to imitate the procedure employed by Acosta and Villamizar in \cite{COMAME2012} for the construction of the Dirichlet to Neumann (DtN) condition for a single obstacle in the half-plane from the multiple DtN condition for the full-plane \cite{Grote01,JCP2010}. However, a formulation of the KFE for waveguides or more arbitrary geometries may not be possible.

As parallel efforts, we are aware of several attempts to formulate high order methods for time-harmonic acoustic scattering using finite element techniques. A rather complete set of these contributions can be found in \cite{Dinachandra-Raju2018}. Most of them have been done without employing high order local ABC. 
However, we are aware of three works where high order finite element basis were used in combination with high order local ABCs to formulate overall high order methods for acoustic scattering. For instance, Schmidt and Heier \cite{schmidt2015} used high order Lagrange polynomials as a finite element basis combined with Feng's ABC of several orders. They obtained high order of convergence but only for wavenumber $k=1$. Moreover, Feng's absorbing boundary condition is obtained from an asymptotic expansion of the exact Dirichlet-to-Neumann ABC. This is a disadvantage compared with the Karp's expansion, which is an exact representation of the outgoing wave outside the artificial boundary. Also,
Barucq et. al \cite{Barucq2014} 
derived an ABC for exterior problems modeled by the Helmholtz equation in the plane from an approximation of the Dirichlet-to-Neumann map. They reached only fourth order convergence for circular obstacles.
More recently, Tahsin and Villamizar \cite{CMAME2019}, formulated an overall arbitrary high order method for acoustic scattering combining a finite element implementation in isogeometric analysis (IGA), which uses arbitrary high order
non-uniform rational B-splines (NURBS) as a basis, with the high order local farfield expansions ABC employed in this study. Their experiments corroborated the overall high order convergence for high and very low frequencies (wavenumbers). Our plan is to compare the accuracy and computational cost performances 
of our overall high order DC scheme combined with the KFE, constructed in this work, against the high order method introduced in \cite{CMAME2019} and  report these results elsewhere.

\appendix


\section{Sixth order DC finite difference approximations} \label{AppendixA}

The sixth order DC finite difference approximation to the Helmholtz differential operator is obtained by substituting $p=6$ in (\ref{pthorderT}) which leads to
\begin{align}
{\mathcal{H}}^{6}_5U^{6}_{ij}\equiv {\mathcal H}^2_5 U^{6}_{ij} 
&-\left( \frac{1}{3!r_i}D_{3r}^{4}{U}^{4}_{ij} +
\frac{2}{4!}D_{4r}^{4}{U}^{4}_{ij} + 
\frac{2}{4!r_i^2}D_{4\theta}^{4}{U}^{4}_{ij}\right)h^2\nonumber \\
&-\left( \frac{1}{5!r_i}D_{5r}^{2}{U}^{4}_{ij} +
\frac{2}{6!}D_{6r}^{2}{U}^{4}_{ij} + 
\frac{2}{6!r_i^2}D_{6\theta}^{2}{U}^{4}_{ij}\right)h^4,
\label{6thorder}
 \end{align}
where 
 $D_{4r}^4U^4_{ij}$, $D_{3r}^4U^4_{ij}$, and $D_{4\theta}^4U^4_{ij}$ are fourth order approximations of 
 $\left(u_{4r}\right)_{ij}$, $\left(u_{3r}\right)_{ij}$, and $\left(u_{4\theta}\right)_{ij}$, respectively. They are obtained
by applying standard centered fourth order finite difference to  $U^4_{ij}$ and are given by
\begin{align}
&\left(u_{4r}\right)_{ij}\approx D_{4r}^4{U}^4_{ij}
\equiv 
\frac{1}{\Delta r^4}\left[
\frac{-1}{6} {U}^4_{i-3,j}
 + 2{U}^4_{i-2,j} - \frac{13}{2} U^4_{i-1,j} + \frac{28}{3} U^4_{i,j}
-\frac{13}{2}U^4_{i+1,j}
+2 U^4_{i+2,j} -\frac{1}{6}U^4_{i+3,j}
\right]\label{D4r4}
\\
&\left(u_{3r}\right)_{ij}\approx D_{3r}^4U^4_{ij}
\equiv 
\frac{1}{\Delta r^3}
\left[
\frac{1}{8} U^4_{i-3,j}
 -U^4_{i-2,j} + \frac{13}{8} U^4_{i-1,j} -
\frac{13}{8}U^4_{i+1,j} + U^4_{i+2,j} -\frac{1}{8}U^4_{i+3,j}
\right]\label{D3r4}
\\
&\left(u_{4\theta}\right)_{ij}\approx D_{4\theta}^4U^4_{ij}
\equiv 
\frac{1}{\Delta \theta^4}\left[
\frac{-1}{6} U^4_{i,j-3}
 + 2U^4_{i,j-2} - \frac{13}{2} U^4_{i,j-1} + \frac{28}{3} U^4_{i,j}
-\frac{13}{2}U^4_{i,j+1}
+2 U^4_{i,j+2} -\frac{1}{6}U^4_{i,j+3}
\right].\label{D4theta4}
\end{align}

Also,
 $D_{6r}^2U^4_{ij}$, $D_{5r}^2U^4_{ij}$, and $D_{6\theta}^2U^4_{ij}$
 are second order approximations of 
 $\left(u_{6r}\right)_{ij}$, $\left(u_{5r}\right)_{ij}$, and $\left(u_{6\theta}\right)_{ij}$, respectively. They are obtained
by applying standard centered second order finite difference to  $U^4_{ij}$ and are given by
\begin{align}
&\left(u_{5r}\right)_{ij}\approx D_{5r}^2U^4_{ij}
\equiv 
\frac{1}{\Delta r^5}  \left[
\frac{-1}{2} U^4_{i-3,j}
 + 2U^4_{i-2,j} - \frac{5}{2} U^4_{i-1,j} 
+\frac{5}{2}U^4_{i+1,j}-2 U^4_{i+2,j} +\frac{1}{2}U^4_{i+3,j}
\right]\label{D5r2} \\\
&\left(u_{6r}\right)_{ij}\approx D_{6r}^2U^4_{ij}
\equiv 
\frac{1}{\Delta r^6}\left[
 U^4_{i-3,j}
 -6U^4_{i-2,j}  + 15U^4_{i-1,j} -20U^4_{i,j}
+15U^4_{i+1,j}
 -6 U^4_{i+2,j} +U^4_{i+3,j}\right]\label{D6r2}\\
&\left(u_{6\theta}\right)_{ij}\approx D_{6\theta}^2U^4_{ij}
\equiv 
\frac{1}{\Delta \theta^6}\left[
 U^4_{i,j-3}
-6U^4_{i,j-2}  + 15U^4_{i,j-1} -20U^4_{i,j}
+15U^4_{i,j+1} 
-6U^4_{i,j+2} +U^4_{i,j+3}.
\right]\label{D6theta}
\end{align}
For points close to the artificial boundary, appropriate one-sided finite difference approximations are employed.

\section{Approximations for the Neumann boundary condition} \label{AppendixB}

In the case of the Neumann condition at the boundary of a circular obstacle of radius $r_0$, the strategy to be employed for the construction of the fourth order approximation follows very closely the one employed at the artificial boundary for the KFE in subsection \ref{ssec:3.2}. 

First, we consider the standard second order centered finite difference approximation for the Neumann boundary condition at $r=r_0$, retaining an approximation of its leading order truncation error,
\begin{align}
\frac{U^4_{2,j}-U^4_{0,j}}{2\Delta r}  = -\frac{\partial \uinc}{\partial r} + \frac{\Delta r^2}{6} \left(Dr\right)_{3r}^2 U^2_{1,j}. \label{BCneuDC4} 
\end{align}
This equation contains the ghost values $U^4_{0,j}$. They are also present in the fourth order approximation of the Helmholtz equation (\ref{4thorder}) evaluated at $i=1$. Therefore, they can be eliminated by combining these equations. The equations (\ref{BCneuDC4}) and (\ref{4thorder}) also contain second order approximations of one-sided third and fourth forward radial derivatives at $r=r_0$, respectively. They act on the second order approximations $U^2_{1,j}$ of $u$ obtained in the first step and they are given by
\begin{align}
& \left(Dr\right)_{3r}^2U^2_{1,j} \equiv 
\frac{1}{\Delta r^3} \left[ -\frac{3}{2} U^2_{0,j} + 5 U^2_{1,j} - 6 U^2_{2,j} + 3 U^2_{3,j} - \frac{1}{2}U^2_{4,j} \right], \label{Dr3r}\\
&\left(Dr\right)_{4r}^2U^2_{1,j} \equiv 
\frac{1}{\Delta r^4} \left[ 2 U^2_{0,j} - 9 U^2_{1,j} + 16 U^2_{2,j} - 14 U^2_{3,j} + 6U^2_{4,j} - U^2_{5,j} \right]. \label{Dr4r}
\end{align}
It can be shown that the DC formula (\ref{BCneuDC4}) is also a fourth order approximation to the Neumann boundary condition and its proof is completely analogous to those performed in Section \ref{ssec:3}. Therefore, it is not included.

We can increase the fourth order discrete approximation (\ref{BCneuDC4}) of the Neumann condition to an arbitrary $p$th order. 
Again, the definition is a natural extension of (\ref{BCneuDC4}), where the continuous derivatives of higher order truncation error terms $\left(u_{3r}\right)_{1j}$, $\left(u_{5r}\right)_{1j},\dots$, $\left(u_{(p-1)r}\right)_{1j}$ 
are approximated by appropriate right one-sided  discrete operators acting on the previously calculated ($p-2$)th ordered numerical solution $U^{p-2}_{i,j}$, approximating the exact solution $u$.
More precisely,
\begin{align}\label{BCneuDCp} 
\frac{{U}_{2,j}-{U}_{0,j}}{2\Delta r}  = -\frac{\partial \uinc}{\partial r} + 
\frac{h^2}{3!}(Dr)_{3r}^{p-2}U^{p-2}_{ij} +
\frac{h^4}{5!}(Dr)_{5r}^{p-4}U^{p-2}_{ij} + \ldots +
\frac{h^{p-2}}{(p-1)!}(Dr)_{(p-1)r}^2U^{p-2}_{ij}. 
\end{align}

The proofs that the finite difference formulas (\ref{BC4pNH})-(\ref{RecurrDiscr2p}) and (\ref{BCneuDCp})  approximate their continuous counterparts to $p$th order are very similar to the proof of Theorem \ref{Th2}. Therefore, they are omitted. The key assumption for these proofs is that the discrete functions $U^{p-2}_{N,j}$,  $F^{p-2}_{l-1,j}$ and $G^{p-2}_{l-1,j}$,
which are obtained in a previous step, are ($p-2$)th order approximations of the continuous solutions $u(R,\theta)$, $F^2_{l-1} (\theta)$, and 
$G^2_{l-1} (\theta)$, respectively.


\section*{Acknowledgments}
The first and third authors acknowledge the support provided by the European Union's Horizon 2020 research and innovation programme under the Marie Sklodowska-Curie grant No 777778 MATHROCKS,
and by the Office of Research and Creative Activities (ORCA) of Brigham Young University. The work of S. Acosta was partially supported by NSF [grant number DMS-1712725]. O. Rojas also thank the European Union’s Horizon 2020 research and innovation programme under the ChEESE project, grant agreement No. 823844, for partially funding this work. 


\bibliographystyle{elsarticle-num}
\bibliography{DeferredCorrection}

\end{document}